\newtheorem{teor}{Theorem}
\newtheorem{prop}{Proposition}
\newtheorem{lemma}{Lemma}
\newcommand{\abs}[1]{\left\vert#1\right\vert}         %  valore assoluto
\newcommand{\PGammaL}{{\operatorname{P\Gamma L}}}
\begin{document}

\title[Maximal pairwise generating sets]{On the maximal number 
of elements pairwise generating the finite alternating group}

\author{Francesco Fumagalli} 
\address{Dipartimento di Matematica e Informatica 'Ulisse Dini', 
Viale Morgagni 67/A, 50134 Firenze, Italy}
\email{francesco.fumagalli@unifi.it}

\author{Martino Garonzi}
\address{Departamento de Matem\'atica, Universidade de Bras\'ilia, Campus 
Universit\'ario \\ Darcy Ribeiro, Bras\'ilia-DF, 70910-900, Brazil}
\email{mgaronzi@gmail.com}

\author{Pietro Gheri} 
\address{Dipartimento di Matematica e Informatica 'Ulisse Dini', 
Viale Morgagni 67/A, 50134 Firenze, Italy}
\email{pietro.gheri@unifi.it}

\thanks{The second author acknowledges the support of Funda\c{c}\~{a}o de Apoio \`a 
Pesquisa do Distrito Federal (FAPDF) - demanda espont\^{a}nea 03/2016, and of 
Conselho Nacional de Desenvolvimento Cient\'ifico e Tecnol\'ogico (CNPq) - Grant 
numbers 302134/2018-2, 422202/2018-5. The work of the second author on the project 
leading to this application has
received funding from the European Research Council (ERC) under the
European Union's Horizon 2020 research and innovation programme
(grant agreement No. 741420). He was also
supported by the National Research, Development and Innovation Office
(NKFIH) Grant No.~K132951, Grant No.~K115799, Grant No.~K138828.}

\date{}

\subjclass[2010]{Primary 20B15, 20B30; Secondary 20B40, 20D60, 05D40}

\keywords{Alternating group, group generation, covering}

\begin{abstract}
Let $G$ be the alternating group of degree $n$. 
Let $\omega(G)$ be the maximal size of a 
subset $S$ of $G$ such that $\langle x,y \rangle
= G$ whenever $x,y \in S$ and $x \neq y$ and 
let $\sigma(G)$ be the minimal size of a family of 
proper subgroups of $G$ whose union is $G$. We 
prove that, when $n$ varies in the family of composite 
numbers,
%not a prime number of the 
%form $(q^d-1)/(q-1)$ for some prime power $q$ and 
%some integer $q\geq 2$, \red{(Questo \`e diverso da 
%quanto diciamo nel Teorema 1!)} 
$\sigma(G)/\omega(G)$ tends to $1$ as 
$n \to \infty$. Moreover, we explicitly 
calculate $\sigma(A_n)$ for $n \geq 21$ 
congruent to $3$ modulo $18$.
\end{abstract}

\maketitle

\section{Introduction}
Given a finite group $G$ that can be 
generated by $2$ elements but not by $1$ 
element, set $\omega(G)$ to be the largest 
size of a \emph{pairwise generating set} 
$S \subseteq G$, that is, a subset $S$ of 
$G$ with the property that $\langle x,y 
\rangle = G$ for any two distinct elements 
$x,y$ of $S$. Also, set $\sigma(G)$ to be 
the \emph{covering number} of $G$, that is 
the minimal number of proper subgroups of $G$ 
whose union is $G$. We will reserve the 
term ``covering'' of $G$ for any family of 
proper subgroups of $G$ whose union is $G$.

Since any proper subgroup of $G$ contains 
at most one element of any pairwise 
generating set, $\omega(G) \leq \sigma(G)$ 
always.

S. Blackburn \cite{B} and L. Stringer 
\cite{stringer} proved that if $n$ is odd and 
$n \neq 9,15$ then $\sigma(S_n)=\omega(S_n)$ 
and that if $n \equiv 2 \pmod 4$ then $\sigma 
(A_n) =\omega (A_n)$. Stringer also proved 
that $\omega(S_9) < \sigma(S_9)$. In  
\cite{B} it is conjectured that, if $S$ is a finite non abelian simple group, then
$\sigma(S)/\omega(S)$ tends to $1$ as the 
order of % the nonabelian simple group 
$S$ goes 
to infinity. We remark that, apart from the 
above, the only cases in which the precise 
value of $\omega(G)$ is known are for groups
$G$ of 
Fitting height at most $2$ (\cite{LM}) and 
for certain linear groups (see \cite{BEGHM}).

In this paper, we prove the following results.

\begin{teor} \label{main}
Let $n$ vary in the set of composite 
positive integers. Then 
$$
\lim_{n \rightarrow \infty} \frac{\sigma 
(A_n) }{\omega (A_n)} =1.
$$
\end{teor}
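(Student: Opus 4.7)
The plan is to sandwich the two invariants between comparable quantities. Since $\omega(A_n)\le\sigma(A_n)$ holds automatically, it suffices to exhibit a function $f(n)$ such that $\sigma(A_n)\le (1+o(1))f(n)$ and $\omega(A_n)\ge (1-o(1))f(n)$ as $n\to\infty$ through composite integers. The common value $f(n)$ will essentially be the size of a distinguished family of maximal subgroups of $A_n$, chosen according to the prime factorisation of $n$, and both directions of the estimate will be organised around this family.

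\textbf{Upper bound on $\sigma(A_n)$.} I would build an explicit covering $\mathcal{F}$ of $A_n$ using mostly intransitive maximal subgroups of type $(S_k\times S_{n-k})\cap A_n$ for a carefully chosen $k$, together with a few imprimitive subgroups of type $(S_a\wr S_b)\cap A_n$ (where $n=ab$, available because $n$ is composite). The family $\mathcal{F}$ is arranged so that every $g\in A_n$ either stabilises one of the chosen $k$-subsets, preserves one of the chosen block systems, or has cycle type belonging to a short ``exceptional'' list that can be covered by $o(f(n))$ additional subgroups. A direct count then yields $|\mathcal{F}|=(1+o(1))f(n)$.

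\textbf{Lower bound on $\omega(A_n)$.} I would fix a conjugacy class $C\subseteq A_n$ of elements whose cycle structure (for instance a product of two cycles of coprime lengths summing to $n$, or an $n$-cycle when $n$ is odd) forces containment in very few maximal subgroups. The number of non-generating pairs in $C\times C$ is then bounded above by
\[
\sum_{M}|C\cap M|^{2},
\]
summed over representatives $M$ of the conjugacy classes of maximal subgroups of $A_n$. Mar\'oti's bound on primitive subgroup orders makes the primitive contribution negligible, while the intransitive and imprimitive contributions are governed by the same combinatorial quantity as $|\mathcal{F}|$. A greedy (or Tur\'an-type) extraction from the complement of the non-generating graph on $C$ then produces a pairwise generating subset of size $(1-o(1))f(n)$.

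\textbf{Main obstacle.} The delicate step is matching the two sides of the sandwich with no constant-factor loss: one must pick $C$ and $\mathcal{F}$ in concert so that each element of $C$ lies in essentially a single member of $\mathcal{F}$, since otherwise the extraction of a large independent set in the non-generating graph undercounts by a multiplicative constant. When $n$ admits several small prime divisors, several families of imprimitive subgroups compete in the character sum and must be handled individually. The composite hypothesis on $n$ is used precisely to guarantee the existence of the intransitive and imprimitive families needed to build both $\mathcal{F}$ and $C$ and to suppress the primitive contribution, separating the argument from the qualitatively different behaviour of $A_p$ for $p$ prime.
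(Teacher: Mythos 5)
Your overall sandwich strategy ($\omega \le \sigma$, plus an explicit covering for the upper bound and an extraction from a non-generating graph for the lower bound) is the same skeleton as the paper's, and your treatment of primitive subgroups (Mar\'oti's order bound, the classification of primitive groups containing elements with few cycles) matches what the paper actually uses. But the central quantitative step of your lower bound has a genuine gap. You bound the number of non-generating pairs in $C\times C$ by $\sum_M |C\cap M|^2$ and then invoke a greedy/Tur\'an extraction. A Tur\'an-type bound based on the total edge count returns an independent set of size roughly $|C|^2/(2E+|C|)$, and this recovers the full target $f(n)$ only when the ``cliques'' $C\cap M$, $M\in\mathcal{F}$, all have comparable size. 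That fails badly in the case $4\mid n$, where the correct target is $f(n)\sim 2^{n-2}$ (Mar\'oti's value of $\sigma(A_n)$), realized by the stabilizers of the odd-size subsets $\Delta$ with $1<|\Delta|<n/2$: the corresponding classes of bicycles $C(\Delta)$ have sizes $(|\Delta|-1)!\,(n-|\Delta|-1)!$, which range over many orders of magnitude as $|\Delta|$ varies, so $\sum_M|C\cap M|^2$ is dominated by the few largest cliques and the edge-count Tur\'an bound degenerates to a quantity polynomial in $n$ rather than $(1-o(1))2^{n-2}$. Relatedly, a single conjugacy class $C$ cannot work there: even the largest class of bicycles meets only $\binom{n}{a}=o(2^{n-2})$ of the required intransitive subgroups, so one must take the union of all types $(a,n-a)$ with $a$ odd, $1<a<n/2$.

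The paper avoids this by using Haxell's independent transversal theorem (Theorem 2 of \cite{haxell}) instead of Tur\'an: partition the vertex set into the classes $C(\Delta)$, let $k$ be the maximum degree of the graph whose edges are the non-generating pairs lying in \emph{distinct} classes, and verify $|C(\Delta)|\ge 2k$ for every $\Delta$. This requires only a per-vertex degree bound measured against the \emph{smallest} class, and it outputs exactly one vertex per class, i.e.\ exactly $f(n)$ pairwise generating elements. Verifying $k=o(\min_\Delta|C(\Delta)|)$ is where the real work lies, and it is also where your upper-bound sketch is slightly off for odd composite $n$: there the dominant family is not intransitive but imprimitive (the $|\mathcal{P}_p|=n!/((n/p)!^p\,p!)$ block systems with $p$ blocks, $p$ the smallest prime divisor of $n$), the class $C$ is the $n$-cycles, and the degree bound rests on Stringer's Lemma comparing $|S_{n/p}\wr S_p|$ with $|S_{n/m}\wr S_m|$ for other divisors $m$, together with Jones's classification of primitive groups containing an $n$-cycle; the intransitive subgroups contribute only the negligible $\sum_i\binom{n}{i}<2^n\ll 3^n\le|\mathcal{P}_p|$. (The case $n\equiv 2\pmod 4$ is quoted from Stringer, and for $4\mid n$ the upper bound is simply Mar\'oti's theorem rather than a new construction.) To repair your argument you would need either to pass to per-vertex degree bounds (Caro--Wei) with the degree of each $x\in C(\Delta)$ compared to $|C(\Delta)|$ itself, or to adopt the independent transversal viewpoint outright.
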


Note that Stringer's result implies our 
theorem when $n \equiv 2 \pmod 4$, so we will 
only prove it when $n$ is 
divisible by $4$ or an odd composite integer.

%As for the case $n=p$ prime, we note the following. Let $p$ vary in the set of prime numbers not of the form $(q^k-1)/(q-1)$ for $q$ a prime power and $k$ a positive integer. Then 
%\[ \lim_{p \rightarrow \infty} \frac{\sigma(A_p)}{\omega(A_p)} %= 1.
%\]
%This is because, by ..., the only maximal subgroups of $G$ are 
%the Sylow $p$-subgroup normalizers, and each one of them contains 
%precisely $p-1$ cycles of length $p$, which all generate the same 
%subgroup of order $p$. Since we can cover $G$ using these maximal 
%subgroups together with all maximal intransitive subgroups, we 
%obtain that
%$$(p-2)! \leq \omega(G) \leq \sigma(G) \leq (p-2)!+2^{p-1}$$
%and this implies that $\omega(G)$ is asymptotic to $(p-2)!$. This 
%was observed by Mar\'oti in \cite{maroti}.

Our second result concerns the precise value of  
$\sigma(A_n)$ when $n \equiv 3 \pmod{18}$.

\begin{teor} \label{alt3q}
Let $n > 3$ be an integer with $n \equiv 3 
\pmod{18}$ and let $q:=n/3$. Then
$$
\sigma(A_n) = \sum_{i=1}^{q-2} \binom{n}{i} 
+ \frac{1}{6} \frac{n!}{q!^3}.
$$
A minimal covering of $A_n$ consists of 
the intransitive maximal subgroups of type
$(S_i \times S_{n-i})\cap A_n$, for  
$i=1,\ldots,q-2$, and the imprimitive maximal 
subgroups with $3$ blocks, which 
are isomorphic to 
$(S_q \wr S_3)\cap A_n$.
\end{teor}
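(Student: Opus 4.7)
I first verify that the proposed family $\mathcal{F}$ covers $A_n$. Let $g \in A_n$ lie in no intransitive maximal subgroup $(S_i \times S_{n-i}) \cap A_n$ with $i \in [1, q-2]$; equivalently, no union of cycles of $g$ has size in $[1, q-2]$, and by complementation none in $[2q+2, n-1]$. Since $n \equiv 3 \pmod{18}$ forces $q \equiv 1 \pmod 6$, so $q$ is odd with $3 \mid q-1$ and $3 \mid q+2$, each cycle of $g$ has length in $[q-1, 2q+1]$ (or $g$ is an $n$-cycle). The lower bound $q-1$ on cycle lengths forces the cycle count $k$ to satisfy $k \leq \lfloor 3q/(q-1) \rfloor = 3$, and $k = 2$ is impossible because a permutation of $3q$ points with two cycles has sign $(-1)^{3q-2} = -1 \notin A_n$. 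A short enumeration of the remaining subset-sum constraints leaves only the cycle types $(q,q,q)$, $(q-1,q,q+1)$, $(q-1,q-1,q+2)$, and the $n$-cycle. For each, I exhibit a partition of $\{1,\ldots,n\}$ into three blocks of size $q$ that $g$ preserves: for $(q,q,q)$ use the three cycle supports; for an $n$-cycle use the orbits of $g^3$ (well-defined as $3 \mid n$); for $(q-1,q,q+1)$ let $g$ act as a transposition on two blocks obtained by splitting the $(q \pm 1)$-cycles into equal halves via $g^2$ (using that $q \pm 1$ are even); for $(q-1,q-1,q+2)$ let $g$ act as a $3$-cycle on blocks, using the divisibilities $3 \mid q-1$ and $3 \mid q+2$ to distribute the nine $g^3$-cycles evenly.

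\textbf{Lower bound.} The inequality $\sigma(A_n) \geq |\mathcal{F}|$ will follow from the relation $\omega(A_n) \leq \sigma(A_n)$ once I construct a pairwise generating set of size $|\mathcal{F}|$. For each subset $A$ with $|A| = i \in [2, q-2]$ I pick $x_A \in (S_i \times S_{n-i}) \cap A_n$ of cycle type $(i,\, q-1,\, 2q-i+1)$: this has three cycles summing to $3q$ (so lies in $A_n$) and $A$ is its unique invariant subset of size in $[1, q-2]$. For $|A| = 1$ I use cycle type $(1,\, q+1,\, 2q-2)$, which has the same uniqueness property and additionally fails to stabilize any tripartition into equal blocks of size $q$ (no union of cycles of $x_A$ or of $x_A^3$ has size $q$). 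For each tripartition $\pi$ into $3$ blocks of size $q$, I pick $y_\pi \in (S_q \wr S_3) \cap A_n$ of cycle type $(q,q,q)$ consisting of a $q$-cycle on each block of $\pi$; then $\pi$ is the unique such partition that $y_\pi$ preserves.

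\textbf{Main obstacle.} The hardest step is to verify that $\langle u, v \rangle = A_n$ for every pair $u \neq v$ of selected elements, i.e.\ that no maximal subgroup of $A_n$ contains both. I must rule out common containment in: (i) another member of $\mathcal{F}$, handled by the uniqueness properties above; (ii) an intransitive subgroup $(S_j \times S_{n-j}) \cap A_n$ with $j \geq q - 1$; (iii) an imprimitive maximal subgroup outside $\mathcal{F}$ (with two blocks, or with blocks of size $\neq q$); and (iv) a primitive maximal subgroup of $A_n$. Case (iv) is the deepest: it requires invoking the O'Nan--Scott classification together with Mar\'oti's order bounds on primitive subgroups of $A_n$ to show that the chosen cycle types cannot both lie in such a subgroup. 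Cases (ii) and (iii) reduce to combinatorial subset-sum arguments showing that simultaneous preservation of an invariant set or alternate block structure by two distinct chosen elements forces their underlying data to coincide.
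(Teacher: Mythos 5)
Your upper bound (the covering argument) is essentially the paper's: reduce to cycle types with all parts at least $q-1$, note $k\leq 3$ and $k$ odd, and handle $(q,q,q)$, $(q-1,q,q+1)$, $(q-1,q-1,q+2)$ and the $n$-cycles via block systems, using $q\equiv 1\pmod 6$. The lower bound, however, is where your proposal genuinely diverges from the paper and where it breaks. You propose to exhibit a pairwise generating set of size $|\mathcal{F}|$, which would prove the stronger statement $\omega(A_n)=\sigma(A_n)$; the paper never claims this for $n\equiv 3\pmod{18}$ and instead proves the lower bound $\sigma(A_n)\geq|\mathscr{M}|$ by a fractional counting argument (Proposition \ref{prop:swartz}, adapted from Swartz): one fixes a union $\Pi$ of carefully chosen conjugacy classes, one per member of the covering, partitioned by $\mathscr{M}$, and shows $d(M)=\sum_i |M\cap\Pi_i|/|M_i\cap\Pi_i|\leq 1$ for every maximal $M\notin\mathscr{M}$. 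This avoids ever having to choose individual elements that pairwise generate.

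Your construction has a concrete, fatal flaw in the imprimitive part. An element $y_\pi$ of cycle type $(q,q,q)$ whose cycles are the blocks of $\pi$ stabilizes each block as a set, hence lies in the maximal intransitive subgroup $(S_q\times S_{2q})\cap A_n$ stabilizing that block. If $\pi\neq\pi'$ share a block $B$ (and very many pairs of tripartitions do), then $y_\pi$ and $y_{\pi'}$ both lie in the stabilizer of $B$, so $\langle y_\pi,y_{\pi'}\rangle\neq A_n$. This is why the paper attaches the $n$-cycles, not the $(q,q,q)$-elements, to the $3$-block subgroups: an $n$-cycle determines its block system uniquely and stabilizes no proper subset. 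A second gap of the same kind affects the $x_A$: your claim that common containment in $(S_j\times S_{n-j})\cap A_n$ with $j\geq q-1$ ``forces the underlying data to coincide'' is false, since $x_A$ and $x_{A'}$ with $A\neq A'$, both of type $(i,q-1,2q-i+1)$, may be chosen with identical supports for their $(q-1)$-cycles and then both lie in $(S_{q-1}\times S_{2q+1})\cap A_n$. Avoiding all such coincidences simultaneously is a nontrivial selection problem (this is exactly what Haxell's theorem is used for in the proof of Theorem \ref{main}, and even there it only yields asymptotic, not exact, counts). Finally, your case (iv) is waved at rather than proved; the paper disposes of primitive subgroups with Lemma \ref{ncube} and Mar\'oti's bound $|M|\leq 2^n$ inside the $d(M)\leq 1$ computation, not via O'Nan--Scott for pairs of elements.
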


\section{Technical lemmas}

In this section we will collect some technical 
results we will need throughout the paper.

\begin{lemma}[Lemma 10.3.3 in 
\cite{stringer}]\label{stringer} 
Let $n$ be an odd integer which is the 
product of at least three primes 
(not necessarily distinct). Then if $n$ is 
sufficiently large we have 
$$
\frac{|S_{n/p} \wr S_p|}{|S_{n/m} \wr S_m|} 
\geq 2^{\sqrt{n}-3}
$$
where $m$ is any nontrivial proper divisor 
of $n$ different from $p$.
\end{lemma}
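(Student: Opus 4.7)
The plan is to apply Stirling's formula to the factorials inside $|S_{n/d}\wr S_d|=((n/d)!)^d\cdot d!$ and then perform a case analysis on the divisor $m$ relative to $p$. I take $p$ to be the smallest prime divisor of $n$; since $n$ has at least three prime factors counted with multiplicity, $p\leq n^{1/3}$.

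First I would set $g(d):=\log_2|S_{n/d}\wr S_d|$ and, via $\log_2 k!=k\log_2(k/e)+\tfrac12\log_2(2\pi k)+O(1/k)$, obtain
\[
g(d)=n\log_2 n-h(d)+E(d),
\]
where $h(d):=(n-d)\log_2 d$ is the main term and $E(d)=-(n+d)\log_2 e+\tfrac{d}{2}\log_2(2\pi n/d)+\tfrac12\log_2(2\pi d)+O(d^2/n)$ collects the corrections. Subtracting gives $g(p)-g(m)=\bigl(h(m)-h(p)\bigr)+\bigl(E(p)-E(m)\bigr)$. Since $h''(d)<0$ on $(1,n)$, the function $h$ is strictly concave; this is the main technical tool.

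I would then split into two regimes. If $m\leq 2p$, then $m\geq p+2$ (all divisors of odd $n$ are odd), and because $h'(2p)\geq n/(2p\ln 2)-O(\log n)>0$ for $n$ large, $h$ is increasing on $[p,2p]$. Monotonicity combined with the concavity-based estimate $h(p+2)-h(p)\geq 2h'(p+2)$ yields
\[
h(m)-h(p)\geq 2h'(p+2)\geq n^{2/3}/\ln 2-O(\log n),
\]
which dominates $\sqrt n$, while the correction $|E(p)-E(m)|=O(p\log n)=O(n^{1/3}\log n)$ is negligible. If $m>2p$, concavity implies $h(m)\geq\min(h(2p),h(n/p))$, and direct computation gives $h(2p)-h(p)=n-2p-p\log_2 p=n-o(n)$ and $h(n/p)-h(p)=(n-n/p)\log_2(n/p)-(n-p)\log_2 p=\Omega(n\log n)$ for $n$ large.

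The main obstacle is absorbing the Stirling correction $E(m)$, specifically the $(m/2)\log_2(2\pi n/m)$ piece, which can be of order $n$ when $m$ is close to $n/p$. Fortunately this term is bounded by a constant multiple of $n$ (its maximum, attained near $m\sim n/e$, is $\approx 0.265\,n$) and is partially offset by the positive $(m-p)\log_2 e$ contribution in $E(p)-E(m)$. Consequently, in the second regime $|E(p)-E(m)|\leq Cn$ for some explicit $C<1$, while the main term is $(1-o(1))n$ or larger; and in the first regime the correction is vanishingly small compared to the $n^{2/3}$ main term. Collecting these estimates yields $g(p)-g(m)\geq \sqrt n-3$ for all sufficiently large $n$ satisfying the hypothesis, establishing the lemma.
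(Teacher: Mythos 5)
The paper does not actually prove this lemma: it is imported verbatim as Lemma 10.3.3 of Stringer's thesis \cite{stringer}, so there is no in-paper argument to compare yours against. Judged on its own, your proof is essentially correct and self-contained. The decomposition $g(d)=n\log_2 n-h(d)+E(d)$ with $h(d)=(n-d)\log_2 d$ is right, $h$ is indeed strictly concave, the hypothesis that $n$ has at least three prime factors is used exactly where it must be (to get $p\le n^{1/3}$, without which the lemma fails, e.g.\ for $n=pq$ with $p$ near $\sqrt n$), and the two regimes give $h(m)-h(p)\gtrsim n^{2/3}$ for $m\in[p+2,2p]$ and $h(m)-h(p)\ge\min\bigl(n-2p-p\log_2 p,\ \tfrac19 n\log_2 n\bigr)=(1-o(1))n$ for $m\in[2p,n/p]$, both of which dominate $\sqrt n-3$ together with the error terms.

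One numerical claim in your error analysis is wrong, though harmlessly so: the term $\tfrac{m}{2}\log_2(2\pi n/m)$ is increasing on $(0,2\pi n/e)$, so on the relevant range $m\le n/3$ its maximum is $\tfrac{n}{6}\log_2(6\pi)\approx 0.71\,n$, attained at $m=n/3$, not $\approx 0.265\,n$ near $m\sim n/e$. What saves you is precisely the offsetting you mention: the combination $(m-p)\log_2 e-\tfrac{m}{2}\log_2(2\pi n/m)=\tfrac{m}{2}\log_2\!\left(\tfrac{e^2m}{2\pi n}\right)-p\log_2 e$ is minimized near $m=2\pi n/e^3$ with value about $-0.23\,n$, and the remaining corrections (the $\tfrac12\log_2(2\pi d)$ terms and the $O(m^2/n)\le n/108$ tail of Stirling's series) are negligible, so $E(p)-E(m)\ge -Cn$ with $C\approx 0.24<1$ as you assert. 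Since the main term in the second regime is at least $(1-o(1))n$, the conclusion stands; I would just rewrite that sentence to bound the combined quantity directly rather than the single term.
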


The following lemma is a generalization of 
\cite[Lemma 4]{B} and its proof uses the 
same ideas. We need to apply it for 
$k \leq 3$. 

\begin{lemma} \label{ncube}
Let $a_1,\ldots,a_k$ be positive integers with 
$\sum_{i=1}^k a_i = n$. Let $M$ be a subgroup of $S_n$. 
The number of conjugates of $M$ in $S_n$ containing a fixed 
element of type $(a_1,\ldots,a_k)$ is at most $n^k$.
\end{lemma}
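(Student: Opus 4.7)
The plan is to exploit a double-counting identity
$$
N \cdot |N_{S_n}(M)| \;=\; |M \cap X| \cdot |C_{S_n}(x)|,
$$
where $X := x^{S_n}$ is the $S_n$-conjugacy class of the fixed element $x$ and $N$ denotes the number of conjugates of $M$ in $S_n$ containing $x$. From this identity I then bound each of the two factors on the right-hand side elementarily.

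To establish the identity, I would count the set $\Omega := \{g \in S_n : gxg^{-1} \in M\}$ in two ways. First, $g^{-1}Mg$ contains $x$ if and only if $g \in \Omega$, and membership in $\Omega$ is invariant under left multiplication by $h \in N_{S_n}(M)$, since $hMh^{-1}=M$ forces $(hg)x(hg)^{-1} = h(gxg^{-1})h^{-1}$ to lie in $M$ precisely when $gxg^{-1}$ does. Hence $\Omega$ is a disjoint union of exactly $N$ right cosets of $N_{S_n}(M)$, giving $|\Omega| = N \cdot |N_{S_n}(M)|$. Second, fibering $\Omega$ over $M \cap X$ via the map $g \mapsto gxg^{-1}$, each fiber is a coset of $C_{S_n}(x)$, so $|\Omega| = |M \cap X| \cdot |C_{S_n}(x)|$. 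Equating the two expressions yields the identity.

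From $M \leq N_{S_n}(M)$ we get the trivial chain $|M \cap X| \leq |M| \leq |N_{S_n}(M)|$, which, combined with the identity, gives $N \leq |C_{S_n}(x)|$. It remains to show $|C_{S_n}(x)| \leq n^k$. Writing $m_\ell$ for the number of cycles of $x$ of length $\ell$, the standard formula yields
$$
|C_{S_n}(x)| = \prod_\ell \ell^{m_\ell} m_\ell! = \prod_\ell \prod_{i=1}^{m_\ell}(i\ell),
$$
and since each factor $i\ell \leq m_\ell \ell \leq n$, we obtain $|C_{S_n}(x)| \leq n^{\sum_\ell m_\ell} = n^k$.

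No step here presents a genuine obstacle; the only subtle point is verifying the coset-invariance that underlies the first count of $\Omega$, which follows at once from the definition of the normaliser. The rest is elementary double counting and a direct estimate of the centraliser.
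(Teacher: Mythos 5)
Your proof is correct and is essentially the paper's argument in slightly different packaging: the paper double-counts incidences between the conjugacy class $X=x^{S_n}$ and the conjugates of $M$, while you double-count the transporter set $\Omega=\{g : gxg^{-1}\in M\}$, but the resulting identity and the two bounds used ($|M\cap X|\le |M|\le |N_{S_n}(M)|$ and $|C_{S_n}(x)|\le n^k$, the latter being the paper's estimate $N\ge n!/n^k$ for the class size in disguise) coincide. No gaps.
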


\begin{proof}
Let $N$ be the number of elements of $S_n$ of type 
$(a_1,\ldots,a_k)$ and let $g$ be an element 
of this type. 
We want to show that $N \geq n!/n^k$. 
Assume that $a_1,\ldots,a_k$ are organized so that $k_i$ of 
them equal $a_i$ for $i=1,\ldots,t$, so that 
$k_1 + \ldots + k_t = k$ and $a_1k_1 + \ldots + a_tk_t = n$. 
Since the centralizer of $g$ in $S_n$ is isomorphic to 
$\prod_{i=1}^t C_{a_i} \wr S_{k_i}$ we deduce that
\begin{align*}
N = \frac{n!}{a_1^{k_1} k_1! \cdots a_t^{k_t} k_t!} \geq 
\frac{n!}{(a_1k_1)^{k_1} \cdots (a_tk_t)^{k_t}} \geq 
\frac{n!}{n^k},
\end{align*}
being $n^k = (a_1k_1+\ldots+a_tk_t)^{k_1+\ldots+k_t}$.

Let us double-count the size of the set $X$ of pairs $(h,H)$ 
such that $H$ is a subgroup of $S_n$ conjugate to $M$ and 
$h \in H$ is of type $(a_1,\ldots,a_k)$. We have 
$|X| = N \cdot a(M)$ where $a(M)$ is the 
number of conjugates of $M$ containing a fixed $h \in G$ 
of this type % $(a_1,\ldots,a_k)$ 
and 
$|X|=|S_n:N_{S_n}(M)| \cdot b(M) \leq |S_n|$ where $b(M) 
\leq |M|$ is the number of elements of $M$ of type 
$(a_1,\ldots,a_k)$. We 
obtain $$n! \geq |X| = N \cdot a(M) \geq (n!/n^k) a(M).$$ 
It follows that $a(M) \leq n^k$.
\end{proof}

In the following we make frequent use of 
Stirling's inequalities, which holds for 
every integer $t\geq 2$,
\begin{equation}\label{eq_Stirling}
\sqrt{2 \pi t} \cdot (t/e)^t <t!< 
e\sqrt{t}\cdot (t/e)^t.    
\end{equation}

\begin{lemma}\label{l:y!/x!}
Let $X := \{(x,y) \in \mathbb{N}^2\ :\ 2 
\leq x \leq y-2\}$. Then 
$$\lim_{\substack{|(x,y)| \to \infty \\ 
(x,y) \in X}} \frac{y!^{x-1}}{x!^{y-1}} = + 
\infty.$$
\end{lemma}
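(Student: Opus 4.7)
My plan is to reduce the ratio to a product of identical lower bounds and then analyze the base using Stirling. Writing $y - 1 = (x-1) + (y-x)$ and $y!/x! = \prod_{i=x+1}^y i$, I obtain the identity
$$
\frac{y!^{x-1}}{x!^{y-1}} \;=\; \frac{(y!/x!)^{x-1}}{x!^{\,y-x}} \;=\; \prod_{i=x+1}^{y}\frac{i^{x-1}}{x!}.
$$
Since each factor has $i \geq x+1$ and there are $y - x \geq 2$ of them, this gives the lower bound
$$
\frac{y!^{x-1}}{x!^{y-1}} \;\geq\; a_x^{\,y-x}, \qquad a_x \;:=\; \frac{(x+1)^{x-1}}{x!}.
$$
The problem thus reduces to establishing two properties of the base: a uniform bound $a_x > 1$ for $x \geq 2$, and $a_x \to \infty$ as $x \to \infty$.

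For the uniform bound I would note $a_2 = 3/2$ together with
$$
\frac{a_{x+1}}{a_x} \;=\; \left(1 + \frac{1}{x+1}\right)^{x} \;>\; 1,
$$
so $a_x \geq 3/2$ for all $x \geq 2$. For the divergence, applying the upper Stirling estimate $x! < e\sqrt{x}(x/e)^{x}$ from~\eqref{eq_Stirling} together with the crude $(x+1)^{x-1} \geq x^{x-1}$ yields $a_x > e^{x-1}/x^{3/2} \to \infty$. This Stirling estimate is the only step that requires more than bookkeeping, and it is the main (minor) technical point.

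To conclude, I take a sequence $(x_n, y_n)$ in $X$ with $|(x_n, y_n)| \to \infty$; since $y_n \geq x_n + 2$, necessarily $y_n \to \infty$. Passing to a subsequence, either $x_n$ remains bounded, in which case $y_n - x_n \to \infty$ and $a_{x_n}^{\,y_n - x_n} \geq (3/2)^{y_n - x_n} \to \infty$, or $x_n \to \infty$, in which case $a_{x_n} \to \infty$ and $a_{x_n}^{\,y_n-x_n} \geq a_{x_n}^{2} \to \infty$. In either case the lower bound diverges, proving the limit is $+\infty$.
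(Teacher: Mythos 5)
Your proof is correct, and it runs on the same engine as the paper's: the per-factor quantity $i^{x-1}/x!$ at $i=y+1$ is exactly the ratio $f(x,y+1)/f(x,y)$ that the paper uses to show $f(x,y):=y!^{x-1}/x!^{y-1}$ is nondecreasing in $y$, so your product identity is the telescoped form of that observation. The difference is in how the argument is closed. The paper bounds each ratio below by $1$ and then shows the boundary value $f(x,x+2)$ diverges, which settles sequences with $x\to\infty$; as written it does not explicitly treat the regime where $x$ stays bounded and $y\to\infty$, since monotonicity alone only yields $f(x,y)\geq f(x,x+2)$, a constant for fixed $x$ (that case does follow from the same ratio, but the paper leaves it implicit). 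You instead bound each of the $y-x$ factors below by $a_x=(x+1)^{x-1}/x!$, establish both the uniform bound $a_x\geq 3/2$ (via $a_{x+1}/a_x=(1+\tfrac{1}{x+1})^x>1$) and the divergence $a_x\to\infty$ (via Stirling, where the paper invokes the equivalent elementary inequality $x!\leq x^{x-1}$), and then split the limit into the two regimes explicitly. This buys a genuinely two-dimensional treatment of the limit $|(x,y)|\to\infty$ at the cost of one extra small computation, and it is the more complete write-up; note also that the bounded-$x$ regime is the one actually needed in the application ($x=p$ the smallest prime divisor of $n=pq$ can stay equal to $3$ while $y=q\to\infty$).
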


\begin{proof}
Set $$f(x,y) := \frac{y!^{x-1}}{x!^{y-1}}.$$
Note that $f(x,y) \leq f(x,y+1)$ whenever 
$(x,y) \in X$. This is because the claimed 
inequality is equivalent to $x! \leq 
(y+1)^{x-1}$ for $(x,y)\in X$, which 
is a consequence of the 
inequality $x! \leq x^{x-1}$, which can be 
easily proved by induction.

We are left to check that $f(x,x+2)$ tends 
to $+\infty$ when $x$ tends to $+\infty$. 
This can be proved directly using calculus 
techniques and again the fact that
$x!\leq x^{x-1}$.
%$x! \leq x(x/e)^x$.
\end{proof}

\begin{lemma} \label{bimpr}
Let $d \geq 2$, $k \geq 5$ be integers such 
that and $n = dk \geq 26$. 
Then $$|S_d \wr S_k| = d!^k k! \leq (n/5e)^n 
(5n)^{5/2} e \sqrt{n}.$$
\end{lemma}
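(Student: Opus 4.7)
My plan is to apply the Stirling upper bound $t! < e\sqrt{t}\,(t/e)^t$ from \eqref{eq_Stirling}. Since the desired inequality is essentially tight at $(d,k,n)=(2,13,26)$, applying Stirling to both factorials simultaneously is too lossy when $d$ is small, so I split into the cases $d=2$ and $d\geq 3$.

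In the case $d=2$ I keep the exact value $d!=2$ and apply Stirling only to $k!$, yielding
\[(d!)^k k! = 2^k k! \leq 2^k\cdot e\sqrt{k}(k/e)^k = e\sqrt{n/2}\,(n/e)^{n/2}.\]
Substituting this into the target and simplifying reduces the goal to
\[(n - 5/2)\ln 5 + n/2 \leq (1/2)\ln 2 + ((n+5)/2)\ln n.\]
Setting $f(n)$ equal to the right-hand side minus the left-hand side, one computes $f'(n) = (1/2)\ln n + 5/(2n) - \ln 5$, which is positive for $n\geq 26$ since $\ln 26 > \ln 25 = 2\ln 5$. Monotonicity reduces the case to the single numerical check $f(26) > 0$.

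In the case $d\geq 3$ I apply Stirling to both $d!$ and $k!$; after simplification, the claim reduces to $F(d,k)\leq 0$, where
\[F(d,k) := (n - 5/2)\ln 5 + (k/2 - 3)\ln d - (n - k + 5/2)\ln k.\]
An elementary verification shows $\partial F/\partial d = k\ln(5/k)+(k/2-3)/d \leq 0$ for $d \geq 3$, $k\geq 5$, so $F$ is nonincreasing in $d$. Combined with the constraint $n=dk\geq 26$, it suffices to check $F\leq 0$ on the finite list $(d,k)\in\{(6,5),(5,6),(4,7),(4,8)\}$ together with the family $(3,k)$ for $k\geq 9$. For the latter I would verify that $\partial F(3,k)/\partial k = 3\ln 5 + (1/2)\ln 3 - 2 - 5/(2k) - 2\ln k < 0$ for $k\geq 9$, reducing to the single check $F(3,9)<0$. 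The main obstacle is the tightness at $(d,n)=(2,26)$: the per-factor Stirling slack $e\sqrt{2}(2/e)^2/2\approx 1.04$ for $d!$ accumulates to roughly $1.7$ over $k=13$ copies, which exceeds the available margin in the lemma and forces the exact treatment of $d!=2$ in the first case.
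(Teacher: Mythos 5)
Your proof is correct, but it takes a genuinely different route from the paper's. The paper fixes $n$ and, for $5\le k\le n/5$ (equivalently $d\ge 5$), applies Stirling to both factorials to get $d!^k k!\le f(k)\,(n/e)^n e\sqrt{k}$ with $f(x)=(nx)^{x/2}/x^n$; a one-variable sign analysis of $f'$ shows $f$ is maximized on $[5,n/5]$ at an endpoint, namely at $x=5$, which yields the bound immediately. The remaining cases $d\in\{2,3,4\}$ are then dispatched ``case by case'' with only the thresholds $n\ge 18$, $n\ge 2$, $n\ge 26$ stated and no details given. You instead split at $d=2$ versus $d\ge 3$: for $d\ge 3$ you take logarithms and exploit monotonicity of the two-variable function $F(d,k)$ in $d$, reducing to a short boundary list plus a one-parameter family $(3,k)$ handled by monotonicity in $k$; for $d=2$ you keep $2!$ exact and apply Stirling only to $k!$. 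I have checked your reductions (the formula for $F$, the sign of $\partial F/\partial d$, and the numerical values at $(6,5)$, $(5,6)$, $(4,7)$, $(4,8)$, $(3,9)$, and $f(26)\approx 0.025>0$) and they are all correct. What your approach buys is completeness and an explicit explanation of the paper's hidden subtlety: applying the Stirling upper bound to $2!$ itself incurs a multiplicative factor of about $1.04$ per copy, hence about $1.7$ over $k=13$ copies, which exceeds the margin of roughly $1.11$ available at $(d,n)=(2,26)$; so the double-Stirling strategy genuinely fails there and the exact treatment of $2!$ is necessary, a point the paper's ``case by case'' remark leaves implicit. What the paper's approach buys is a shorter, cleaner argument for the main range $d\ge 5$, since the endpoint comparison $f(x)\le\max\{f(5),f(n/5)\}=f(5)$ avoids any finite case list in that range.
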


\begin{proof}
Set $f(x) := (nx)^{x/2}/x^n$ for any real 
$x \geq 5$. The derivative of 
$f$ is $f'(x) = (1/(2x)) f(x) g(x)$ where 
$g(x) = x \log(nx)-2n+x$, so 
$f'(5) < 0$ being $n \geq 14$, and
$f$ is 
decreasing in $x=5$. Since 
$g'(x) = \log(nx)+2$ is positive (being 
$x \geq 5$) and the sign of $f'$ 
equals the sign of $g$, we deduce that in 
the interval $[5,n/5]$ we have 
$f(x) \leq \max \{f(5),f(n/5)\}$ 
and this equals $f(5)$ being $n \geq 25$. 
Therefore, using the bound 
$m! \leq (m/e)^m e \sqrt{m}$, if $k\leq n/5$ 
we obtain
%\begin{align*}
$$ d!^k k!  \leq (d/e)^{dk} e^k d^{k/2} 
(k/e)^k e k^{1/2} = f(k)\cdot (n/e)^n e \sqrt{k} 
%(n/ke)^n (nk)^{k/2} e k^{1/2} \\
  \leq (n/5e)^n (5n)^{5/2} e \sqrt{n}.$$
%\end{align*}
The case $k > n/5$ corresponds to $d < 5$ 
and can be done case by case. For $d=4$ we 
need $n \geq 18$, for 
$d=3$ we need $n \geq 2$ and for $d=2$ we 
need $n \geq 26$.
\end{proof}

\begin{lemma} \label{impr_4_n3}
Let $n$ be an even positive integer, 
$r$ an odd divisor of $n$ such that 
$3\leq r \leq n/3$ and set 
$g(n,r)=(r!)^{n/r} \left( n/r \right) ! + 
r\cdot r! \cdot ((n/r)!)^r$.
Then there exists a positive constant $C$ 
such that $g(n,r)\leq C\big((n/3)!\big)^3$
for every $n$ large enough.
\end{lemma}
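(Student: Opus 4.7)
The plan is to write $g(n,r)=A(r)+B(r)$ with $A(r):=(r!)^{n/r}(n/r)!$ and $B(r):=r\cdot r!\cdot((n/r)!)^r$, set $q:=n/3$, and prove uniformly in $r$ that each of $A(r)$ and $B(r)$ is at most a constant times $(q!)^3$ for $n$ large enough. The two estimates will then add to give the lemma. The starting observation is the exact identity $B(3)=3\cdot 3!\cdot(q!)^3=18(q!)^3$, which pins down the target order of magnitude; every other contribution must be shown to be negligible compared to $(q!)^3$.

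For $A(r)=|S_r\wr S_{n/r}|$, when $r\le n/5$ I would apply Lemma~\ref{bimpr} (with $d=r$ and $k=n/r\ge 5$) to get $A(r)\le(n/5e)^n(5n)^{5/2}e\sqrt{n}$, while the lower bound in \eqref{eq_Stirling} yields $(q!)^3\ge(2\pi n/3)^{3/2}(n/(3e))^n$; the ratio $A(r)/(q!)^3$ is then $(3/5)^n$ times a polynomial, so it tends to $0$ uniformly. If instead $n/5<r\le n/3$, the parity constraint (with $n$ even and $r$ odd the quotient $n/r$ is an even integer $\ge 3$) forces $n/r=4$, and a direct application of \eqref{eq_Stirling} to $A(n/4)=24((n/4)!)^4$ gives a ratio of order $(3/4)^n$, again tending to $0$.

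For $B(r)$ with $r\ge 5$, I would apply \eqref{eq_Stirling} to both $r!$ and $(n/r)!$, after which $\log(B(r)/(q!)^3)$ reduces, modulo $O(r)+O(\log n)$ corrections, to the single expression
\[
\Phi(r)\;:=\;\tfrac{r}{2}\log(rn)-n\log(r/3).
\]
One computes $\Phi''(r)=1/(2r)+n/r^2>0$, so $\Phi$ is strictly convex on $[5,n/3]$ and its maximum there is attained at an endpoint. At $r=5$ one has $\Phi(5)=-n\log(5/3)+O(\log n)\to-\infty$, and at $r=n/3$ one has $\Phi(n/3)=-(2n/3)\log n+O(n)\to-\infty$, so $B(r)/(q!)^3\to 0$ uniformly over admissible odd divisors $r\ge 5$. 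Combining this with the $A$-estimate yields $g(n,r)\le 19(q!)^3$ for all $n$ large enough, so any $C\ge 19$ works. The main technical obstacle is guaranteeing this uniformity across the wide range $3\le r\le n/3$; the convexity of $\Phi$, together with the appeal to Lemma~\ref{bimpr} for $A$, is precisely what reduces uniformity to the inspection of a pair of explicit endpoints.
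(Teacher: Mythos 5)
Your proposal is correct and its skeleton (split off the two summands, treat $r=3$ and $r=n/4$ separately, control the main range $5\le r\le n/5$ via Lemma~\ref{bimpr} and Stirling) matches the paper's; the genuine divergence is in how you handle the second summand $B(r)=r\cdot r!\cdot((n/r)!)^r$ for $r\ge 5$. The paper simply observes that $B(r)/r=|S_{n/r}\wr S_r|$ is again the order of a wreath product with top group of degree $r\ge5$ and bottom groups of degree $n/r\ge 2$, so Lemma~\ref{bimpr} applies to \emph{both} summands at once and yields $g(n,r)\le (r+1)(n/5e)^n(5n)^{5/2}e\sqrt n$ in one line; your bespoke convexity analysis of $\Phi(r)=\tfrac r2\log(rn)-n\log(r/3)$ re-derives by hand what that second application of the lemma gives for free. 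Your route does work and has the small virtue of being self-contained for the $B$-term (and it transparently isolates the exact borderline contribution $B(3)=18(q!)^3$), but note one point of rigour you should tidy up: you first take the maximum of $\Phi$ over $[5,n/3]$ at the endpoints and only then add an $r$-dependent correction $O(r)$, which as literally stated does not follow (the correction at an interior $r$ can be as large as $O(n)$ while $\max\{\Phi(5),\Phi(n/3)\}$ is only of order $-n$). This is easily repaired either by noting that adding a linear term preserves convexity, so the endpoint argument applies to $\Phi(r)+cr$ as well, or better by tracking Stirling carefully: the factors $e^{-r}$ from $(r/e)^r$ and $e^{r}$ from $(e\sqrt{n/r})^r$ cancel exactly, so the true correction is only $O(\log n)$, uniformly in $r$.
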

\begin{proof}
Let $n$ and $r$ as in the statement and 
note that since $n$ is even and $r$ is odd we 
have $3\leq r\leq n/4$. If $r=3$ then 
$g(n,r)=6^{n/3} (n/3)! + 18((n/3)!)^3<19((n/3)!)^3$
for large enough $n$. Using Stirling's inequalities (\ref{eq_Stirling}), it is easy to prove that $\frac{(n/4)!^4}{(n/3)!^3} \leq n \cdot (3/4)^n$ for large enough $n$, and this easily implies the result when $r=n/4$.

Assume therefore that $5\leq r\leq n/5$. 
%We have $((n/3)!)^2> 6^{n/3-1}$, which 
%implies that 
%$g(n,3)\leq 19((n/3)!)^3$, for every large 
%enough $n$.
%Similarly, $((n/4)!)^3> (4!)^{n/4-1}\cdot 
%(n/4)$ holds for sufficiently large $n$, and 
%from this it follows that 
%$g(n,n/4)\leq 25\cdot ((n/4)!)^4$. 
%Therefore by another application 
%of (\ref{eq_Stirling}), $g(n,n/4)\leq 
%C((n/3)!)^3$ for $n$ 
%large enough.
We apply Lemma \ref{bimpr} and deduce that 
\begin{align*}
g(n,r)&\leq (r+1)\left(\frac{n}{5e}
\right)^n(5n)^{5/2} e \sqrt{n} \leq 
\left(\frac{n}{5e}\right)^n(5n)^{5/2}
n^{3/2}e.    
\end{align*}
By another application of 
(\ref{eq_Stirling}), we get that 
there exists a positive constant $D$ such 
that 
$$\frac{g(n,r)}{((n/3)!)^3}\leq 
(3/5)^nn^{5/2}\cdot D,$$
which tends to zero as $n$ tends to infinity.  
\end{proof}

\section{Proof of Theorem \ref{main}}

Let $\Gamma$ be an (undirected) graph. Recall that 
the degree of a vertex of $\Gamma$ is defined as the 
number of vertices of $\Gamma$ that are adjacent to it. 
Also, a set of vertices is called 
independent if no two of its elements are 
connected by an edge. We prove Theorem \ref{main} as an 
application of  following result due to P.E.~Haxell.

\begin{teor}[Theorem 2 in \cite{haxell}] 
\label{haxell}
Let $k$ be a positive integer, let 
$\Gamma$ be a 
graph of maximum degree 
at most $k$, and let 
$V(\Gamma)= V_1 \cup \dots \cup V_n$ be a 
partition of the vertex set of $\Gamma$. 
Suppose that $|V_i|\geq 2k$ for each $i$. 
Then $\Gamma$ has an independent set $\{v_1, 
\dots ,v_n \}$ where $v_i \in V_i$ for each 
$i$. 
\end{teor}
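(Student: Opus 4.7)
The plan is to prove this by contradiction, via an augmenting-path/exchange argument. Call a set $I \subseteq V(\Gamma)$ a \emph{partial independent transversal} (PIT) if $I$ is independent in $\Gamma$ and $|I \cap V_i| \leq 1$ for every $i$. Suppose no PIT meets all $n$ classes, and choose a PIT $I$ that covers the maximum number $m < n$ of classes; let $V_{j_0}$ be a class not covered by $I$. For each $v \in V_{j_0}$ set $B(v) := N(v) \cap I$; since $\deg(v) \leq k$ we have $|B(v)| \leq k$. If $B(v) = \emptyset$ for some $v$, then $I \cup \{v\}$ is a larger PIT, a contradiction. So every $v \in V_{j_0}$ has at least one, and at most $k$, blockers in $I$.

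Next I would build a rooted tree $T$ of attempted swaps. The root is labelled by $V_{j_0}$, and at each internal node labelled by a class $V_i$ together with a chosen candidate $v_i$ (where $v_{j_0} \in V_{j_0}$), the children are the classes $V_c$ of the blockers $b \in B(v_i)$, each such child equipped with a newly chosen replacement vertex $v_c \in V_c \setminus \{b\}$. A leaf is reached exactly when a chosen candidate has no blockers left in the relevant restriction of $I$, which would let us perform a sequence of swaps up the tree (evict each old blocker, insert the replacement) to produce a PIT covering one more class. To force termination I would insist on a \emph{minimal} such tree: among all possible augmenting trees rooted at $V_{j_0}$, pick one with the fewest nodes, and, to break ties, whose candidates avoid reusing vertices.

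The heart of the argument is a local counting at a deepest node $(V_i, v_i)$ in $T$. Its class $V_i$ has at least $2k$ vertices; at most $k$ of them can be ``forbidden'' as blockers of candidates already placed higher in $T$ (because those higher candidates together have degree sum into $V_i$ bounded by the max degree $k$ of each), and at most $k-1$ of them are already used as candidates at ancestor nodes (again bounded via $k$). The $\geq 2k$ bound leaves at least one vertex of $V_i$ free of all constraints; choosing such a vertex as $v_i$ produces either a valid augmentation (contradicting maximality of $m$) or a smaller augmenting tree (contradicting minimality). Either way we reach a contradiction, so $m = n$ and an independent transversal exists.

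The main obstacle will be setting up the minimality condition on the augmenting tree precisely enough to prevent blockers from cycling indefinitely, and matching it with a counting argument in which the threshold $2k$ emerges tightly: the factor of $2$ accounts for the fact that each class under consideration must simultaneously avoid the at-most-$k$ blockers of candidates above and the at-most-$k$ candidate vertices already placed along the ancestor path. Getting those two ``$k$''s to add without double-counting, while keeping the tree minimal, is the delicate point of the proof.
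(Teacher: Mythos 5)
First, a contextual remark: the paper does not prove this statement at all --- it is quoted verbatim as Theorem 2 of Haxell's note \cite{haxell} and used as a black box --- so there is no internal proof to compare against, and your attempt must be judged on its own. Your sketch has the right general flavour (Haxell's argument is indeed an augmenting/swapping argument starting from a maximum partial independent transversal), but the step you call ``the heart of the argument'' fails as stated. You claim that at a deepest node $(V_i,v_i)$ at most $k$ vertices of $V_i$ are forbidden as blockers of candidates placed higher in the tree, ``because those higher candidates together have degree sum into $V_i$ bounded by the max degree $k$ of each''. Each higher candidate separately has at most $k$ neighbours, so with $t$ higher candidates you only get a bound of $tk$, not $k$; worse, the genuine obstruction is that the vertices of the current partial transversal $I$ itself (not merely the tree candidates) may dominate \emph{all} $2k$ vertices of $V_i$ --- this is exactly the situation the proof must defeat, and no count local to a single class can rule it out. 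Similarly, the claim that at most $k-1$ vertices of $V_i$ are ``already used as candidates at ancestor nodes'' has no justification, since nothing in your setup prevents a class from being revisited many times in the tree. Finally, the dichotomy ``a constraint-free $v_i$ yields either a valid augmentation or a smaller augmenting tree'' is not established: a vertex avoiding those particular constraints can still have blockers in $I$, which merely grows the tree, so neither termination nor the contradiction is secured.

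The mechanism by which the threshold $2k$ actually enters Haxell's proof is a \emph{global} count over the set $J$ of all classes touched by the process, not a local count at one node. One shows that (if no independent transversal exists) there is a partial independent transversal $T$ whose restriction $T_J$ to $W:=\bigcup_{j\in J}V_j$ has at most $|J|-1$ vertices and yet dominates every vertex of $W$ outside $T_J$. Counting edges between $T_J$ and $W\setminus T_J$ then gives
$$2k|J|-(|J|-1)\ \leq\ |W\setminus T_J|\ \leq\ e(T_J,\,W\setminus T_J)\ \leq\ k\,|T_J|\ \leq\ k(|J|-1),$$
which is impossible for $k\geq 1$. To repair your write-up you would need to replace the local argument at the deepest node by this domination-versus-degree count over all active classes, using it both to show the swapping process makes progress and to derive the final contradiction; as it stands, the key quantitative step of your proposal is incorrect.
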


We will apply Theorem \ref{haxell} to prove Theorem \ref{main}, first for $n$ odd and composite, then for $n$ divisible by $4$. This will correspond to two different graphs.

\subsection{Case $n$ odd}

We first consider the case $n$ is an odd 
composite number. In this section, $p$ will always be the smallest prime divisor of $n$. For each proper nontrivial divisor $m$ of 
$n$, let $\mathcal{P}_m$ be the set of 
partitions of the set $\{ 1 , \dots , n \}$ 
into $m$ blocks each of cardinality 
$n/m$.
We want to find a maximal set of 
$n$-cycles in $A_n$ pairwise generating $A_n$
and in particular we will prove the following.
\begin{prop}\label{prop:omega_n_odd}
If $n$ is a sufficiently large odd composite number and $p$ denotes the smallest prime divisor of $n$, 
then $\omega(A_n)\geq 
\abs{\mathcal{P}_p}$.
\end{prop}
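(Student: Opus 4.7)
The plan is to apply Haxell's theorem (Theorem~\ref{haxell}) to a graph whose vertex classes are the sets $V_\pi$. For each $n$-cycle $g \in A_n$ and each divisor $d \mid n$ with $1 < d < n$, the subgroup $\langle g^d \rangle$ has exactly $d$ orbits, all of size $n/d$, and these orbits form the unique partition with $d$ blocks stabilised by $g$. Taking $d = p$ gives a well-defined map $g \mapsto \pi(g) \in \mathcal{P}_p$; setting $V_\pi := \{g \in A_n : g \text{ is an } n\text{-cycle and } \pi(g) = \pi\}$ partitions all $n$-cycles of $A_n$ (using that, for odd $n$, every $n$-cycle is even). Transitivity of $A_n$ on $\mathcal{P}_p$ yields the common size $|V_\pi| = (n-1)!/|\mathcal{P}_p| = (n/p)!^{p}\,p!/n$.

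Consider the graph $\Gamma$ on $\bigsqcup_\pi V_\pi$ joining $x \in V_\pi$ and $y \in V_{\pi'}$ (with $\pi \neq \pi'$) exactly when $\langle x, y \rangle \neq A_n$. I intentionally put no edges inside a single class, which is harmless: an independent transversal picks only one vertex per class, so the (in fact complete) edge pattern inside each $V_\pi$ plays no role. Thus an independent transversal of $\{V_\pi\}$ in $\Gamma$ is precisely a pairwise generating set of $n$-cycles of size $|\mathcal{P}_p|$, and the whole problem reduces to bounding the maximum degree $k$ of $\Gamma$.

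To that end, observe that any edge from $g \in V_\pi$ forces the other endpoint to lie in a maximal subgroup $M$ of $A_n$ containing $g$ and different from the block stabiliser of $\pi$. By the uniqueness noted above, $g$ lies in exactly one imprimitive maximal subgroup $M_d := (S_{n/d} \wr S_d) \cap A_n$ for each divisor $d$ with $1 < d < n$, and a direct count (identical to the one for $|V_\pi|$) gives $(n/d)!^d\,d!/n$ $n$-cycles in $M_d$. For $d \neq p$, Lemma~\ref{stringer} (when $n$ has at least three prime factors) or Lemma~\ref{l:y!/x!} (when $n = pq$ is a product of two odd primes, forcing $q \geq p + 2$) shows that this sub-count is super-polynomially smaller than $|V_\pi|$, and summing over the $n^{o(1)}$ relevant divisors yields imprimitive contribution $o(|V_\pi|)$. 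For the primitive maximal subgroups containing $g$ I would invoke the classical classification of primitive permutation groups of degree $n$ containing an $n$-cycle: for composite $n$ the candidates are essentially projective groups acting on $(q^d-1)/(q-1)$ points, plus finitely many sporadics, each of order at most $n^{O(\log n)}$; combined with Lemma~\ref{ncube} applied with $k=1$ to the element $g$ of cycle type $(n)$ (which bounds the number of conjugates of any such $M$ passing through $g$ by $n$), the total primitive contribution to the degree is $n^{O(\log n)}$.

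Both contributions are negligible compared with $|V_\pi|$, so $k = o(|V_\pi|)$ and, in particular, $|V_\pi| \geq 2k$ for all sufficiently large $n$. Theorem~\ref{haxell} then produces the required independent transversal, establishing $\omega(A_n) \geq |\mathcal{P}_p|$. The main obstacle is the primitive case: the imprimitive bookkeeping follows immediately from the technical lemmas of the preceding section, whereas controlling the primitive maximal subgroups cleanly requires the explicit classification together with the conjugacy count from Lemma~\ref{ncube}.
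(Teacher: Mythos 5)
Your proposal is correct and follows essentially the same route as the paper: the same graph on $n$-cycles partitioned by the orbit partitions of $g^p$, the same application of Haxell's theorem, and the same degree bound via the unique imprimitive maximal overgroup per divisor (handled by Lemmas~\ref{stringer} and~\ref{l:y!/x!}) together with Jones's classification and Lemma~\ref{ncube} for the primitive case. The only cosmetic difference is your sharper $n^{O(\log n)}$ estimate for the primitive contribution where the paper uses the cruder bound $n\log_2(n)\cdot 2^n$; both suffice.
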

Since the imprimitive maximal subgroups of $A_n$ preserving a 
partition with $p$ blocks cover all the $n$-cycles in 
$A_n$, and since the elements of $A_n$ which are not 
$n$-cycles are covered by the maximal intransitive subgroups of 
type $(S_i \times S_{n-i}) \cap A_n$ with $1 \leq i \leq n/3$, we 
deduce from Proposition \ref{prop:omega_n_odd} that
$$|\mathcal{P}_p| \leq \omega(A_n) \leq \sigma(A_n) \leq 
|\mathcal{P}_p| + \sum_{i=1}^{\lfloor n/3 \rfloor} 
\binom{n}{i}.$$
This proves Theorem \ref{main} since the sum on the right-hand 
side is less than $2^n$, so it is asymptotically irrelevant 
compared to
$$|\mathcal{P}_p| = \frac{n!}{(n/p)!^p p!} > \frac{(n/e)^n 
e^p (n/p)^{p/2}}{(n/pe)^n (p/e)^p e \sqrt{p}} = \frac{p^n}{e 
\sqrt{p}} \cdot \left( e^2 \sqrt{n/p^3} \right)^p \geq 3^n$$
where the last inequality holds for sufficiently large $n$.

We are therefore reduced to prove Proposition \ref{prop:omega_n_odd}.

For every $\Delta \in \mathcal{P}_m$ 
let $C(\Delta)$ be the set of 
$n$-cycles $x \in A_n$ such that 
$\Delta$ is the set of orbits of the element 
$x^m$. In other words, $C(\Delta)$ is the 
set of $n$-cycles contained in the maximal 
imprimitive subgroup of $A_n$ whose block 
system is $\Delta$. Using the fact that every $n$-cycle belongs to a unique imprimitive maximal subgroup of $S_n$ with $m$ blocks, it is easy to see that  
$|C(\Delta)|=|S_{n/m} \wr S_m|/n$ using a 
double counting argument.
With a slight abuse of notation, for any 
maximal subgroup $H$ of $A_n$, 
we call $C(H)$ the set of $n$-cycles 
contained in $H$.

We define a graph $\Gamma$ whose 
vertex set is $V(\Gamma)$ and whose edge set is 
$E(\Gamma)$ in the following way.
% Let $p$ the least prime dividing $n$.
$V(\Gamma)$ is the set of 
$n$-cycles of $A_n$ and, for distinct $x,y \in 
V(\Gamma)$, 
we say that  $\{ x,y \} \in E(\Gamma)$ if 
and only if $\langle x,y \rangle 
\neq A_n$ and the orbits of $x^p$ do not 
coincide with those of $y^p$, in 
other words there is no $\Delta \in 
\mathcal{P}_p$ such that 
both $x$ and $y$ belong to $C(\Delta)$.

Since the $n$-cycles are pairwise conjugate in 
$S_n$, the graph $\Gamma$ is vertex-transitive, 
so it is regular, in other words,  
every vertex has the same valency $k$. In order 
to prove Proposition \ref{prop:omega_n_odd}, 
it is enough to prove that $|C(\Delta)| \geq 2k$ 
for all $\Delta \in \mathcal{P}_p$, since then 
the result will follow from Theorem \ref{haxell} 
applied to the partition of the vertex-set of 
$\Gamma$ given by the $C(\Delta)$ with 
$\Delta \in \mathcal{P}_p$.

If $x$ is any vertex, then
$$
k \leq \sum_{H \in \mathcal{H}_x} \left| 
C(H) \right|
$$
where $\mathcal{H}_x$ is the set of maximal 
subgroups of $A_n$ containing $x$, except for 
the maximal imprimitive subgroup with $p$ 
blocks. Clearly, no intransitive subgroup 
contains $x$ so $\mathcal{H}_x$ is made 
of imprimitive and primitive subgroups. Let 
$\mathcal{H}_x^{imp}$ be the set 
of maximal imprimitive subgroups of $A_n$ 
containing $x$ whose number 
of blocks is not $p$, and let 
$\mathcal{H}_x^{prim}$ be the set of maximal 
primitive subgroups of $A_n$ containing $x$.
Then
$$
k \leq \sum_{H \in \mathcal{H}_x^{imp}} 
\left| C(H) \right| + \sum_{H \in 
\mathcal{H}_x^{prim}} \left| C(H) \right| .
$$

We bound the first term of the above sum. 
Let $\Delta_m(x)$ be the partition in 
$\mathcal{P}_m$ whose blocks are the orbits 
of the element $x^m$.
%In the following estimate we let $m$ run on
%all proper nontrivial divisors of $n$ that
%are different from $p$.
Since $n$ has at most $2\sqrt{n}$ positive divisors,
\begin{equation} \label{odd_impr_bound}
\sum_{H \in \mathcal{H}_x^{imp}} \left| 
C(H) \right| = \sum_{m \mid n, m \neq p} 
|C(\Delta_m(x))|
 \leq 2\sqrt{n} 
\max_{m \mid n, m \neq p} |S_{n/m} 
\wr S_m|/n
\end{equation}
where the second summation and the maximum is on all 
nontrivial proper divisors $m$ of $n$ that are 
different from $p$.
%Qui penso che la condizione a pedice $m \neq p$ dovrebbe essere invece $m \not \in \{1,p,n\}$, in particolare l'ultimo termine in (\ref{odd_impr_bound}) \`e zero se $n=p^2$ -M}

Note that, if $n \neq p^2$, the last term in 
(\ref{odd_impr_bound}) is at least $c^n$ for any given constant 
$c$, if $n$ is sufficiently large. This can be checked easily 
using $|S_{n/m} \wr S_m|=(n/m)!^m m!$ and Stirling 
inequalities (\ref{eq_Stirling}).
% $b! \geq (b/e)^b$ which holds for every positive integer $b$.

Lemma \ref{stringer} implies that the last 
term in (\ref{odd_impr_bound}) 
is asymptotically irrelevant compared to
$|C(\Delta)|$ for $\Delta \in 
\mathcal{P}_p$ when $n$ is the product of at least three primes.

We now turn to primitive subgroups.
When $n>23$ (and $n$ is not a prime) the 
primitive maximal subgroups of $A_n$ 
containing $n$-cycles are 
permutational isomorphic to 
$\PGammaL(m,s) \cap A_n$, where 
\begin{equation}\label{eq:q=s^m-1/s-1}
n=\frac{s^m-1}{s-1}    
\end{equation}
for some $m\geq 2$ and some prime power $s$, 
its action on points (or on hyperplanes) 
of a projective space of dimension $m$ 
over the field of $s=p^f$  elements 
(\cite[Theorem 3]{Jones2002}). 
Therefore in particular we have that 
if $H$ is such a subgroup of $A_n$ then
$$\abs{C(H)}<\abs{\PGammaL_m(s)}<2^{n-1}.$$
%\textcolor{red}{(In realt\`a  $\abs{\PGammaL_m(s)}=f\cdot s^{m^2-1}-O(f\cdot s^{m^2-3})<s^{m^2}$ which is smaller than $n^{m+1}$ being $n=(s^m-1)/(s-1)$. Finally as $n>s^{m-1}$ we have $m+1<\log_s(n)+2\leq \log_2(n)+2$ and so $n^{m+1}<2^{(\log_2(n))^2+2}$ definitively.)} \blue{Ok ma questa parte possiamo toglierla adesso no?} \\
By Lemma \ref{ncube}  there are at 
most $n$ conjugates of $H$ containing 
a fixed $n$-cycle $x$. Moreover we show now 
that $S_n$ has at most $\log_{2}(n)$ 
conjugacy classes of such maximal primitive 
subgroups, and therefore this number is at 
most $2\log_{2}(n)$ for $A_n$. 
First observe that, being $n$ odd, if 
$(m,s)$ is a pair satisfying equation 
(\ref{eq:q=s^m-1/s-1}) then $s$ is the 
biggest power of $p$ dividing $n-1$. The 
possible choices for $s$ are at most
the number of primes dividing $n-1$, that is 
at most $\pi(n-1)$ which is trivially 
smaller than $\log_2(n)$. Once that $s$ 
is chosen there is at most only one 
possible value for $m$
such that $(m,s)$ satisfies 
(\ref{eq:q=s^m-1/s-1}). Thus the 
number of these pairs 
is bounded from above by $\log_{2}(n)$.
Now, for a fixed pair $(m,s)$ satisfying 
(\ref{eq:q=s^m-1/s-1})
the group $A_n$ contains at most 
two conjugacy classes of primitive maximal 
subgroups isomorphic to $\PGammaL_m(s)\cap 
A_n$ (it can be proved that the two 
actions of such a group respectively on the
projective points and on the projective 
hyperplanes are equivalent in $S_n$). 
Thus the number of conjugacy classes of 
proper primitive maximal subgroups 
containing an $n$-cycle is at most 
$ 2\log_{2}(n)$.
It follows that 
$$
\sum_{H \in \mathcal{H}_x^{prim}} \left| 
C(H) \right| \leq n\log_2(n) \cdot 2^{n} 
$$
and so it is easy to see that the last term 
in (\ref{odd_impr_bound}) 
is an upper bound also  for this sum 
(remember that we are considering $n\neq p^2$).
%\purple{ATTENZIONE: Questo \`e falso se $n=p^2$, nel qual caso l'ultimo termine in (\ref{odd_impr_bound}) \`e zero - F- Certo! Mi era sfuggito}.

Assume that $n$ is a product of at least three primes, not necessarily distinct. If $\Delta \in \mathcal{P}_p$, then 
by Lemma \ref{stringer} we have 
%\blue{Nella prossima catena di disuguaglianze secondo me bisogna rimuovere il secondo termine (perch\'e \`e il terzo denominatore che stima primitivi e imprimitivi insieme, non il secondo!), e poi bisogna ricordarsi che $n$ ha al massimo $2 \sqrt{n}$ (non $\sqrt{n}$) divisori positivi -M}
$$\frac{|C(\Delta)|}{k}  \geq 
%\frac{|S_{n/p} \wr S_p|/n}{2 \sum_{\substack{H \in 
%\mathcal{H}_x^{imp}}} \left| C(H) \right| } 
%\geq 
\frac{|S_{n/p} \wr S_p|}{2\sqrt{n} 
\max_{p \neq m \mid n} |S_{n/m} 
\wr S_m|} \geq 
\frac{2^{\sqrt{n}-4}}{\sqrt{n}} > 2,$$
%\end{split}
%\end{equation}
which gives the result (here again 
the maximum is on all nontrivial proper divisors $m$ 
of $n$ that are different from $p$).
%\blue{Anche qui il pedice $p \neq m$ dovrebbe essere $m \not \in \{1,p,n\}$}

When $n=pq$ for some prime $q$ distinct from $p$, 
arguing as 
above we have $k \leq 2|S_p \wr S_q|$ for 
large enough $n$, therefore
\[
\frac{|C(\Delta)|}{k}  \geq \frac{|S_{q} \wr 
S_p|}{2|S_{p} \wr S_q|}
\]
and we only have to prove that, for large 
enough $n$, the right-hand side is larger 
than $2$. This follows from Lemma \ref{l:y!/x!}.

Assume finally that $n=p^2$. Then the only contribution is 
given by primitive subgroups, in other words
$$\frac{C(\Delta)}{k} \geq \frac{|S_p \wr S_p|/n}{\sum_{H \in \mathcal{H}_x^{prim}} |C(H)|} \geq \frac{p!^{p+1}/n}{n \log_2(n) \cdot 2^n}.$$
This is clearly larger than $2$ for large enough $n$.

This concludes the proof of Proposition \ref{prop:omega_n_odd} and therefore also of Theorem \ref{main} in the case $n$ odd and composite.

\subsection{Case $n$ divisible by $4$.} 
We now prove Theorem \ref{main} when $n$ 
is divisible by $4$.

 In \cite{maroti}, Mar\'oti proved that 
$\sigma (G) \sim 2^{n-2}$.
We want to prove that $\omega(G)$ is also 
asymptotic to $2^{n-2}$ in this case, 
so that $\omega(G) \sim \sigma(G)$. 
Since $\omega(G) \leq \sigma(G)$, it is 
enough to find a lower bound for $\omega(G)$ 
which is asymptotic to $2^{n-2}$.

We consider the set
$$
\mathcal{S}= \{ \Delta \subseteq 
\{1,\ldots,n\}, \ 1< |\Delta| < n/2, \ 
|\Delta| \text{ odd } \}.
$$  

Since the number of subsets of 
$\{1,\ldots,n\}$ of even size is equal to 
the number of subsets of odd size (this can be seen 
by expanding the equality $0=(1-1)^n$ with the binomial 
theorem), we have
$$|\mathcal{S}|= \sum_{i=2}^{n/4} 
\binom{n}{2i-1} = 2^{n-2}-n \thicksim 
2^{n-2}.$$

Let $V$ be the set of elements of $A_n$ of 
cycle type $(a,n-a)$ for $a$ odd and 
$1 < a < n/2$. We have $V=\bigcup_{\Delta \in 
\mathcal{S}} C(\Delta)$, where $C(\Delta)$ 
is the set of bicycles with orbits $\Delta$ 
and $\Omega\setminus\Delta$.

As in the case $n$ odd, we define a graph 
$\Gamma$ with now vertex set 
$V(\Gamma)=V$ and whose edge set $E(\Gamma)$ is the family of size 
$2$ subsets $\{ x,y \} \subseteq V$ such 
that $\langle x ,y \rangle \neq A_n$ and 
$x$ and $y$ do not belong to the same 
$C(\Delta)$, for all $\Delta\in\mathcal{S}$.

The sets $C(\Delta)$ determine a 
partition of $V(\Gamma)$ and by Theorem 
\ref{haxell} we are done 
if we can prove that, for all $\Delta \in \mathcal{S}$, 
$$ |C (\Delta)| \geq 2k, $$
where $k$ is the maximum degree of a vertex in $\Gamma$.

In \cite[Theorem 1.5]{guest_primitive_affine} and 
\cite[Theorem 1.1]{guest_primitive} a careful and 
detailed description of the primitive 
permutation groups containing a permutation with at most four cycles is given. From that 
analysis it follows that, when $n$ is divisible by $4$ and sufficiently large % (altrimenti c'\`e (3,21) in $M_{24}$)}
there are only two cases in 
which a product of two disjoint cycles of odd length in $S_n$ can be contained in a primitive permutation 
group $H \leq S_n$ not containing the 
alternating group $A_n$, and in both cases 
the cycles have lengths $1$ and $n-1$. 
By our choice of $\mathcal{S}$, $|\Delta| 
\neq 1$, therefore, since by the definition of $V$ the 
intransitive subgroups of $A_n$ cannot 
contain subsets of the form $\{x,y\} \in E(\Gamma)$, the 
only maximal subgroups of $A_n$ containing such sets are the imprimitive ones. 
%\textcolor{red}{In fact, we could have included also elements of type $(1,n-1)$, but that would have required a more careful description of the primitive subgroups, at the expense of readability. (NEEDED?)}

We now evaluate the maximum degree $k$ 
of our graph. Namely, for any 
fixed $x \in C(\Delta)$ we bound the number 
of elements $y \in V 
\setminus C(\Delta)$ such that $\langle x,y 
\rangle \neq A_n$. 

Let $\mathcal{H}^{imp}_x$ be the set of 
maximal imprimitive subgroups of $A_n$ containing 
$x$. The above discussion implies that
$$k \leq \sum_{H \in 
\mathcal{H}^{imp}_x} |H|
$$
Assume that $x \in C(\Delta)$ with 
$|\Delta|=a$, so that $x$ is a product 
of two disjoint cycles of lengths $a$ and 
$n-a$. Moreover assume that $x$ belongs to 
an imprimitive maximal subgroup $W$, say $W 
\simeq S_d \wr S_m$, with $d,m>1$ and $dm=n$.

Then there are two possibilities. 
\begin{itemize}
\item $\Delta$ is the union of some of the 
blocks of $W$. In this case $d \mid a$ and 
$W$ is uniquely determined by $x$, since its 
blocks are exactly the orbits of $x^m$.
\item $m \mid a$ and $\Delta$ (and $\Omega 
\setminus \Delta$) intersects 
each block of $W$ in exactly $a/m$ (resp. 
$(n-a)/m$) elements. 
In this case there are exactly $m$ 
conjugates of $W$ containing $x$: they 
can be obtained by pairing cyclically each 
orbit of $x_1^m$ with an orbit 
of $x_2^m$, where $x_1$ and $x_2$ are 
respectively the restrictions of $x$ to 
$\Delta$ and to 
$\Omega\setminus\Delta$.
\end{itemize}

It follows that 
\begin{eqnarray} \label{impr_4_haxell}
\sum_{H \in \mathcal{H}_x^{imp}} |H|  
 & \leq & \sum_{ r \mid \gcd(a,n)} 
\left( |S_r \wr S_{n/r}|+ r 
\cdot |S_{n/r} \wr S_{r}| \right)\\
 & = & \sum_{r \mid \gcd(a,n)} 
\left( (r!)^{n/r} \left( n/r \right) ! + 
r\cdot r! \cdot ((n/r)!)^r \right). 
\nonumber 
\end{eqnarray}

Since $|C(\Delta)| = \left(|\Delta|-1 
\right) ! \left( n-|\Delta|-1 \right) ! \geq 
(2/n)^2 (n/2)!^2$, inequality (\ref{impr_4_haxell}) together with Lemma \ref{impr_4_n3} gives, for large 
enough $n$,
$$
\frac{k}{|C(\Delta)|} \leq \frac{\sum_{H \in \mathcal{H}_x^{imp}} |H|}{(2/n)^2 (n/2)!^2} \leq  
 \frac{Cn(n/3)!^3}{(2/n)^2 (n/2)!^2} \leq c_2 (2/3)^n n^3
$$
for some constant $c_2$. 
The last inequality can be proved easily using Stirling's 
inequalities (\ref{eq_Stirling}). This proves that 
$k/|C(\Delta)|$ tends to zero as $n \to \infty$, 
hence it is 
smaller than $1/2$ for sufficiently large $n$, which is what 
we wanted to prove.

\section{Proof of Theorem \ref{alt3q}}

The following argument is a slight 
generalization of 
\cite[Section 3]{Swartz}.

Let $G$ be any finite non-cyclic 
group and let $T$ be a finite group 
containing $G$ as a normal subgroup. Let 
$\mathscr{M}$ be a family of maximal 
subgroups of $G$ and let $\Pi$ be a subset 
of $G$. Let $\{M_i\ \vert \ i \in I_{T}\}$ 
be a set of pairwise non-$T$-conjugate 
maximal subgroups of $G$ such that every 
maximal subgroup of $G$ is $T$-conjugate 
to some $M_i$, with $i \in I_T$, and let $\mathscr{M}_i := 
\{ t^{-1}M_it\ :\ t \in T \}$ and 
$\Pi_i := \Pi \cap \bigcup_{M \in 
\mathscr{M}_i} M$, for all $i \in I_{T}$. 
Let $I \subseteq I_{T}$. Suppose that the 
following holds.

\begin{enumerate}
\item $\mathscr{M} = \bigcup_{i \in I} 
\mathscr{M}_i$;
\item $x^{t} \in \Pi$ for all $x \in \Pi$, 
$t \in T$;
\item $\Pi$ is contained in $\bigcup_{M 
\in \mathscr{M}} M$;
\item if $A,B \in \mathscr{M}$ and $A \neq 
B$ then $A \cap B \cap \Pi = \emptyset$;
\item $M \cap \Pi \neq \varnothing$ for 
all $M \in \mathscr{M}$.
\end{enumerate}

Note that this implies in particular that 
$\{\Pi_i\}_{i \in I}$ is a partition of 
$\Pi$. Moreover if $A,B$ are $T$-conjugate 
subgroups then since $\Pi$ and each 
$\Pi_i$ (for $i \in I_{T}$) are unions of 
$T$-conjugacy classes of elements of $T$, 
we have $|A \cap \Pi| = |B \cap \Pi|$ and 
$|A \cap \Pi_i| = |B \cap \Pi_i|$.

For any maximal subgroup $M$ of $G$ 
outside $\mathscr{M}$ define $$d(M) := 
\sum_{i \in I} \frac{|M \cap \Pi_i|}{|M_i 
\cap \Pi_i|}.$$ 
The proof of the following proposition is 
essentially the same as the one in 
\cite[Section 3]{Swartz} but we include it 
for completeness.

\begin{prop}\label{prop:swartz}
Assume the above setting. If $d(M) \leq 1$ for all maximal subgroup 
$M$ of $G$ outside $\mathscr{M}$ then any 
family of proper subgroups of $G$ whose 
union contains $\Pi$ has size at least 
$|\mathscr{M}|$. In other words, 
$\mathscr{M}$ is a minimal covering 
of $\Pi$. Moreover, if $d(M) < 1$ for all 
maximal subgroup $M$ of $G$ outside 
$\mathscr{M}$ then $\mathscr{M}$ is the 
unique minimal covering of $\Pi$.
\end{prop}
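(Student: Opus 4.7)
The plan is to run a weighted counting argument in the spirit of Swartz, using the $T$-action to make all subgroups in one class $\mathscr{M}_i$ behave identically on $\Pi$.

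First I would unpack the hypotheses to get the structural facts. Conditions (1), (3), (4) together give that $\Pi$ is the disjoint union $\Pi=\bigsqcup_{i\in I}\Pi_i$, because every element of $\Pi$ lies in a unique $M\in\mathscr{M}$, and that $M$ lies in exactly one class $\mathscr{M}_i$. For a fixed $M\in\mathscr{M}_i$ one has $M\cap\Pi\subseteq\Pi_i$, so $M\cap\Pi=M\cap\Pi_i$. Since $\Pi$ is $T$-invariant by (2) and $M_i$ is a $T$-conjugate of $M$, the integer $c_i:=|M_i\cap\Pi_i|=|M\cap\Pi_i|$ depends only on $i$. Condition (4) applied inside $\Pi_i$ says that $\Pi_i$ is partitioned by $\{M\cap\Pi_i : M\in\mathscr{M}_i\}$, hence $|\mathscr{M}_i|=|\Pi_i|/c_i$, and summing gives
$$|\mathscr{M}|=\sum_{i\in I}\frac{|\Pi_i|}{c_i}.$$

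Next I would introduce the weight $w:\Pi\to\mathbb{Q}_{>0}$ defined by $w(\pi)=1/c_i$ when $\pi\in\Pi_i$, so that the total weight of $\Pi$ equals $|\mathscr{M}|$. Let $\mathscr{C}$ be any family of proper subgroups of $G$ whose union contains $\Pi$; we may replace each $H\in\mathscr{C}$ by a maximal subgroup of $G$ containing it (this only shrinks the left side of the inequality we want) and therefore assume $\mathscr{C}$ consists of maximal subgroups. For each $M\in\mathscr{C}$, write the weighted contribution
$$W(M):=\sum_{\pi\in M\cap\Pi}w(\pi)=\sum_{i\in I}\frac{|M\cap\Pi_i|}{c_i}.$$
If $M\in\mathscr{M}$, say $M\in\mathscr{M}_i$, then only the $i$-th summand is nonzero and equals $c_i/c_i=1$; if $M\notin\mathscr{M}$, then $W(M)=d(M)\leq 1$ by hypothesis. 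Since $\mathscr{C}$ covers $\Pi$,
$$|\mathscr{M}|=\sum_{\pi\in\Pi}w(\pi)\leq\sum_{M\in\mathscr{C}}W(M)\leq|\mathscr{C}|,$$
proving minimality.

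For the uniqueness clause, assume $d(M)<1$ for every maximal $M\notin\mathscr{M}$ and let $\mathscr{C}$ be a covering of $\Pi$ with $|\mathscr{C}|=|\mathscr{M}|$. Reduce again to maximal subgroups; equality in the chain above forces $W(M)=1$ for every $M\in\mathscr{C}$, which by the strict inequality excludes all $M\notin\mathscr{M}$, so $\mathscr{C}\subseteq\mathscr{M}$, and then cardinality gives $\mathscr{C}=\mathscr{M}$.

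There is no real obstacle beyond bookkeeping: the only step that requires a moment's care is verifying that $M\cap\Pi=M\cap\Pi_i$ for $M\in\mathscr{M}_i$ (so that the $\mathscr{M}_i$-subgroups contribute exactly $1$ and not more), which follows from condition (4) because an element of $\Pi$ lying in such an $M$ would, if it were in some $\Pi_j$ with $j\neq i$, also lie in some $M'\in\mathscr{M}_j$, contradicting $M\cap M'\cap\Pi=\emptyset$.
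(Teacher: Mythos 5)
Your proof is correct and is essentially the paper's own argument: the same decomposition $\Pi=\bigsqcup_{i\in I}\Pi_i$ and the same double count of $\Pi$ against the covering family, with the hypothesis $d(M)\leq 1$ bounding each subgroup's total contribution by $1$. Packaging this as a weight function $w(\pi)=1/|M_i\cap\Pi_i|$ merely lets you treat members of $\mathscr{M}\cap\mathscr{K}$ uniformly instead of first removing them as the paper does; the substance is identical.
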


\begin{proof}
Let $\mathscr{K}$ be any family of maximal subgroups of $G$ such
that $\bigcup_{K \in \mathscr{K}} K \supseteq \Pi$ and suppose 
$\mathscr{K} \neq \mathscr{M}$. We want to prove that $|\mathscr{M}| \leq |\mathscr{K}|$.
%We may assume that $\mathscr{M}$ is not contained in $\mathscr{K}$. 
Define 
$$\mathscr{M}' := 
\mathscr{M}-(\mathscr{M} \cap \mathscr{K}), \hspace{1cm}
\mathscr{K}' 
:= \mathscr{K}-(\mathscr{M} \cap \mathscr{K}).
$$ 
For any $i \in I$, let $m_i$ be the number of subgroups from $\mathscr{M}_i$ in $\mathscr{M}'$, and for any $j \in I_T$ let $k_j$ be the number of subgroups from $\mathscr{M}_j$ in $\mathscr{K}'$. 
%Note that since $\mathscr{M} \neq \mathscr{K}$ we have $k_j > 0$ for some $j \in I_T-I$. 

Observe that since $\mathscr{K}$ covers $\Pi_i$ and $\mathscr{M}$ partitions $\Pi$, the members of $\mathscr{K}'$ must cover the elements of $\Pi_i$ contained in $\bigcup_{M \in \mathscr{M}'} M$. Since $\mathscr{M}$ partitions $\Pi$, the number of such elements is $m_i |M_i \cap \Pi_i|$. Therefore 
$$m_i |M_i \cap \Pi_i| \leq \sum_{j \not \in I} k_j |M_j \cap \Pi_i|.$$
We claim that if $d(M) \leq 1$ for all $M \in \mathscr{K}'$ then $|\mathscr{M}| \leq |\mathscr{K}|$. Indeed, we have
\begin{eqnarray*}
|\mathscr{M}'| & = & \sum_{i \in I} m_i \leq \sum_{i \in I} \sum_{j \not \in I} k_j \frac{|M_j \cap \Pi_i|}{|M_i \cap \Pi_i|} = \\ & = & \sum_{j \not \in I} k_j \sum_{i \in I} \frac{|M_j \cap \Pi_i|}{|M_i \cap \Pi_i|} = \sum_{j \not \in I} k_j d(M_j) \leq \sum_{j \not \in I} k_j = |\mathscr{K}'|.
\end{eqnarray*}
This implies $$|\mathscr{M}| = |\mathscr{M} \cap 
\mathscr{K}|+|\mathscr{M}'| \leq |\mathscr{M} \cap 
\mathscr{K}|+|\mathscr{K}'|=|\mathscr{K}|,$$
and therefore
%This implies that 
$\mathscr{M}$ is a covering of $\Pi$ of 
minimal size. 
Moreover, if $d(M) < 1$ for all maximal subgroup $M$ of $G$ 
outside $\mathscr{M}$, then the above argument shows that 
$|\mathscr{M}| < |\mathscr{K}|$ whenever $\mathscr{M} \neq 
\mathscr{K}$, proving that $\mathscr{M}$ is the unique 
covering of $\Pi$ of minimal size.
\end{proof}

%\textcolor{red}{LA PROSSIMA PARTE L'HO SPOSTATA, PRIMA ERA SOTTO}

From now on let $n\geq 21$
be a positive integer congruent to $3$ 
modulo $18$ and let  $q:=n/3$, $G:=A_n$, 
$T:=S_n$. Note that $q \equiv 1 \pmod 6$. 
We prove Theorem 
\ref{alt3q} by showing (with the use of 
Proposition \ref{prop:swartz}) the 
existence of a 
minimal covering $\mathscr{M}$ for $A_n$  
of size  
$$\sum_{i=1}^{q-2} \binom{n}{i} + 
\frac{1}{6} \frac{n!}{q!^3}.$$

If $n=\sum_{i=1}^t a_i$ and $1 \leq a_1
\leq a_2\leq \ldots \leq a_t$ we denote by 
$(a_1,\ldots,a_t)$ the set of elements of 
$A_n$ whose cycle structure 
consists of $t$ disjoint cycles each of 
length $a_i$, for $i=1,\ldots,t$. 
Note that each $(a_1,\ldots,a_t)$ is 
either empty or an $A_n$-conjugacy class 
or the union of two $A_n$-conjugacy 
classes. The latter case occurs if and 
only if the numbers $a_1,\ldots,a_t$ are 
all odd and pairwise distinct.

Let $\Pi_{-1} =(n)$ be the set of 
all $n$-cycles and for every integer 
$a$ such that $1 \leq a \leq q-2$ define
$$\Pi_a :=
\begin{cases} 
(a,\frac{n-a-1}{2},\frac{n-a+1}{2}) & 
\textrm{if}\ a \equiv 0 \pmod 2 \\
(a,\frac{n-a}{2}-1,\frac{n-a}{2}+1) & 
\textrm{if}\ a \equiv 1 \pmod 2 .
\end{cases}
$$
%Note that if $a$ is odd then $\Pi_a$ may be a class of splitting type.
We define the collection $\mathscr{M}$ of 
$S_n$-conjugacy classes of maximal 
subgroups of $A_n$ as follows.

$\mathscr{M}_{-1}$ is the set of maximal  
imprimitive subgroups of $A_n$ with $3$ 
blocks. Thus the elements of 
$\mathscr{M}_{-1}$ are subgroups 
isomorphic to $(S_q\wr S_3)\cap A_n$.

For every $a$ such that $1\leq a\leq q-2$  
define $\mathscr{M}_a$ to be the set of 
maximal intransitive subgroups 
of $A_n$ which are the stabilizers of a 
set of size $a$.

Finally, let 
$$\Pi := \bigcup_{a=-1,1,\ldots,q-2} \Pi_a 
\quad \textrm{ and } \quad \mathscr{M} 
:= \bigcup_{a=-1,1,\ldots,q-2} 
\mathscr{M}_a.$$ 

In this notation the index set $I$ is 
$\{-1,1,2,\ldots,q-2\}$.

For any $S_n$-conjugacy class 
$\mathscr{M}_j$ of maximal subgroups of 
$A_n$ ($j$ can belong to $I$ or not), let 
$m_j(i)$ be the number of subgroups  
from the $S_n$-class $\mathscr{M}_j$ 
containing a fixed element of $\Pi_i$. The 
number $m_j(i)$ is well-defined because 
each $\Pi_i$ is a $S_n$-conjugacy 
class. Also, as before we denote with 
$I_{S_n}$ an index set for $S_n$-conjugacy class representatives of the maximal 
subgroups of $A_n$.

%\blue{Nel prossimo lemma ho tolto il $2$ a denominatore che poi avevamo chiamato $\varepsilon_j$ perch\'e mi sembra che non serva (ancora), e ho tolto tutti i $2$ che comparivano verso la fine dell`articolo}

\begin{lemma} \label{lem_mji}
If $j \in I_{S_n}$ and $M_j \in 
\mathscr{M}_j$ then 
$$
%\frac{1}{2} \cdot 
%\frac{m_j(i) \cdot |M_j| \cdot 
%|\Pi_i|}{|A_n|} \leq 
|M_j \cap \Pi_i| = 
\frac{m_j(i) \cdot |N_{S_n}(M_j)| \cdot 
|\Pi_i|}{|S_n|} \leq \frac{m_j(i) \cdot 
|M_j| \cdot |\Pi_i|}{|A_n|}.
$$
Moreover, if $M_j$ is not primitive then 
%the last
this inequality is actually an 
equality.
\end{lemma}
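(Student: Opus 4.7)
The plan is to prove the displayed equality via a standard double-counting argument, and then to deduce the inequality from a short index computation that uses the simplicity of $A_n$.

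First I would count in two ways the cardinality of
$$X := \{(x,H)\ :\ x \in \Pi_i,\ H \in \mathscr{M}_j,\ x \in H\}.$$
Since $\Pi_i$ is a single $S_n$-conjugacy class of elements and $\mathscr{M}_j$ is a single $S_n$-conjugacy class of subgroups, the number of members of $\mathscr{M}_j$ containing a fixed $x \in \Pi_i$ is $m_j(i)$ independently of the choice of $x$, and $|H \cap \Pi_i| = |M_j \cap \Pi_i|$ for every $H \in \mathscr{M}_j$. Hence
$$|X| = m_j(i) \cdot |\Pi_i| = |\mathscr{M}_j| \cdot |M_j \cap \Pi_i| = \frac{|S_n|}{|N_{S_n}(M_j)|} \cdot |M_j \cap \Pi_i|,$$
and rearranging gives the claimed equality.

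Next, for the inequality it suffices to check $|N_{S_n}(M_j)| \leq 2|M_j|$. Since $A_n$ is simple and $M_j$ is one of its maximal subgroups, $M_j$ is self-normalizing in $A_n$; hence $N_{S_n}(M_j) \cap A_n = N_{A_n}(M_j) = M_j$, so $[N_{S_n}(M_j) : M_j]$ divides $[S_n : A_n] = 2$, which is the required bound. Substituting into the first equality produces the stated inequality.

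Finally, when $M_j$ is not primitive I would exhibit an odd permutation inside $N_{S_n}(M_j)$, which forces $[N_{S_n}(M_j) : M_j] = 2$ and turns the inequality into an equality. If $M_j = (S_a \times S_{n-a}) \cap A_n$ is intransitive, any transposition supported inside the block of size $n-a$ belongs to the full set-stabilizer $S_a \times S_{n-a}$, which normalizes $M_j$; if $M_j = (S_d \wr S_m) \cap A_n$ is transitive but imprimitive with $d,m \geq 2$, any transposition supported inside a single block lies in $S_d \wr S_m$ and normalizes $M_j$. Each such transposition is odd, so $|N_{S_n}(M_j)| \geq 2|M_j|$ in both subcases. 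The argument is essentially routine; the only mild care point is separating the intransitive and imprimitive subcases when producing the odd normalizing element, and this is what I would regard as the only real obstacle of the proof.
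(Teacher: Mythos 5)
Your proof is correct and follows essentially the same route as the paper: the displayed equality comes from the same double count of incidences between $\Pi_i$ and $\mathscr{M}_j$, and the inequality from the fact that a maximal subgroup of the simple group $A_n$ is self-normalizing, so that $N_{S_n}(M_j)\cap A_n=M_j$ and $[N_{S_n}(M_j):M_j]\leq 2$ (the paper phrases this equivalently as comparing the number of $A_n$-conjugates of $M_j$ with the number of $S_n$-conjugates). The one point where you go beyond the paper is the ``moreover'' clause, which the paper leaves implicit: you settle it correctly by exhibiting an odd permutation (a transposition inside a block, respectively inside an orbit) that normalizes any imprimitive or intransitive $M_j$, forcing $[N_{S_n}(M_j):M_j]=2$ and hence equality.
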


\begin{proof}
Consider the bipartite graph with set of 
vertices $\Pi_i \cup \mathscr{M}_j$ and 
where there is an edge between 
$g \in \Pi_i$ 
and $M \in \mathscr{M}_j$ if and only if 
$g \in M$. 
Since $\Pi_i$ is a conjugacy 
class of $S_n$, the family 
$\mathscr{M}_j$ covers $\Pi_i$ if one of 
its members intersects it. 
By assumption the number of edges of this 
graph equals both 
$m_j(i) \cdot |\Pi_i|$ and 
$|S_n:N_{S_n}(M_j)| \cdot |M_j \cap 
\Pi_i|$. We are left to prove that 
$$
|A_n:M_j| \leq |S_n:N_{S_n}(M_j)| 
%\leq 2|A_n:M_j|.
$$ 
%The first inequality 
This follows from the fact 
that $M_j$ is self-normalized in $A_n$, 
being a maximal subgroup (and $n \geq 5$), 
and $|S_n:N_{S_n}(M_j)|$ is the number of 
$S_n$-conjugates of $M_j$, while 
$|A_n:M_j|=|A_n:N_{A_n}(M_j)|$ is the 
number of $A_n$-conjugates of $M_j$. 
%Moreover 
%$$|S_n:N_{S_n}(M_j)| = 
%\frac{|S_n:M_j|}{|N_{S_n}(M_j):M_j|} \leq 
%|S_n:M_j| = 
%|S_n:A_n| \cdot |A_n:M_j| = 
%2|A_n:M_j|.$$
%If $M_j$ is not primitive then it is 
%either  an intransitive or a 
%transitive imprimitive maximal 
%subgroup of $A_n$, so $\mathscr{M}_j$ 
%consists of an $A_n$-conjugacy class 
%of subgroups. Therefore in this case 
%$|S_n:N_{S_n}(M_j)|=|A_n:M_j|$ and we obtain 
%the stated equality.
%$\abs{M_j\cap \Pi_i}=m_j(i)
%\frac{\abs{M_j}\abs{\Pi_i}}{\abs{G}}$, for every $M_j\in %\mathscr{M}_j$.
\end{proof}

%Before applying the above strategy to the alternating group we need a technical lemma. An element $g \in A_n$ is called ``split'' if there exists a conjugate of $g$ in $S_n$ that is not conjugate to $g$ in $A_n$. Recall that an element is split if and only if its cycle structure consists of cycles of pairwise disjoint odd lengths (including $1$-cycles). Moreover, if $g$ is a split element of $A_n$, then the size of the conjugacy class of $g$ in $A_n$ is half the size of its conjugacy class in $S_n$.

%\begin{lemma} \label{chth}
%If $n \geq 3$ is an integer and $g$ is a split element of $A_n$ then there exists a positive integer $k$, coprime to the order of $g$, such that $g$ and $g^k$ are not conjugate in $A_n$.
%\end{lemma}

%\begin{proof}
%Note that if $g$ is conjugate in $G=A_n$ to all of its powers $g^k$ with $k$ coprime to the order of $g$ then $\chi(g) \in \mathbb{Z}$ for every complex irreducible character $\chi$ of $G$ [...REFERENCE...]. However there exists a self-similar Young diagram in which the cycle structure of $g$ fits [...EXPLAIN...].
%\end{proof}

\begin{lemma} \label{abc3}
Assume $m$ is a positive integer 
divisible by $3$. 
An element of $S_m$ of cycle type 
$(a,b,c)$, with 
$a,b,c \geq 1$ and $a+b+c=m$, stabilizes 
a partition of 
$\{1,\ldots,m\}$ with $3$ blocks if and 
only if at least one of the following 
holds: 
\begin{enumerate}
\item $a=b=c=m/3$.
\item $3$ divides $\gcd(a,b,c)$;
\item One of $a,b,c$ equals $2m/3$;
\item One of $a,b,c$ equals $m/3$ and the 
other two are even.
\end{enumerate}
\end{lemma}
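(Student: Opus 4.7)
The plan is to analyze the action of $g$ on the set $\{B_1,B_2,B_3\}$ of blocks of any $g$-stabilized partition (each block having size $m/3$); this action lies in $\Sym(\{B_1,B_2,B_3\})$ and is therefore either the identity, a $3$-cycle, or a transposition. I would treat each of the three cases in turn, extract the corresponding necessary condition on $(a,b,c)$, and then verify sufficiency by writing down an explicit partition.

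If the action is trivial, each cycle of $g$ is contained in a single block, and since three cycles of total length $m$ are distributed in three blocks each of size $m/3$, we get one cycle per block, forcing $a=b=c=m/3$, which is (1). If the action is a $3$-cycle, then for every cycle $C$ of $g$ of length $\ell$ the relation $g(C)=C$ gives $|C\cap B_1|=|C\cap B_2|=|C\cap B_3|$, whence $3\mid\ell$; this gives $3\mid\gcd(a,b,c)$, which is (2). If the action is a transposition, say swapping $B_1,B_2$ and fixing $B_3$, then the $\langle g\rangle$-orbit of any point of $B_3$ stays in $B_3$ while the $\langle g\rangle$-orbit of any point of $B_1$ alternates between $B_1$ and $B_2$, so each cycle of $g$ is contained either in $B_3$ or in $B_1\cup B_2$, and in the latter case it has even length with $|C\cap B_1|=|C\cap B_2|$. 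Since the cycles lying in $B_1\cup B_2$ must sum to $2m/3$, among the three cycles of $g$ there is either exactly one of length $2m/3$ (yielding (3)) or exactly two of even length summing to $2m/3$ together with a third of length $m/3$ in $B_3$ (yielding (4)); having zero or three cycles in $B_1\cup B_2$ contradicts the length count.

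For the converse, each of (1)--(4) admits an explicit $g$-stabilized partition: take the three cycles as blocks in (1); split each cycle of length divisible by $3$ into three interleaved subsets of equal size (taking every third point along the cycle) in (2); place the two short cycles in $B_3$ and split the cycle of length $2m/3$ by alternating consecutive points between $B_1$ and $B_2$ in (3); place the $m/3$-cycle in $B_3$ and alternate each of the two even cycles between $B_1$ and $B_2$ in (4). There is no real obstacle; the proof is essentially a finite case analysis driven by the action of $g$ on the three blocks, and the only genuine care needed is in correctly enumerating the sub-cases within the transposition case and in checking the parity and divisibility constraints in the interleaving constructions.
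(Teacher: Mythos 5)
Your proof is correct and complete; the paper itself offers no argument for this lemma (its proof reads only ``Straightforward''), and your case analysis according to whether $g$ induces the identity, a $3$-cycle, or a transposition on the three blocks of size $m/3$ is exactly the intended elementary argument, with both directions handled properly. The only point worth making explicit is that $2m/3 = 2(m/3)$ is automatically even, so the alternating construction in case (3) never fails for parity reasons.
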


\begin{proof}
Straightforward.
\end{proof}

We have the following.
\begin{enumerate}
\item $\bigcup_{M \in \mathscr{M}} M = 
A_n$. To see this let 
$g \in A_n$, and let $(a_1,\ldots,a_k)$, 
$1\leq a_1 \leq \ldots 
\leq a_k$, be the cycle type of $g$, with 
$\sum_{i=1}^k a_i = n$. 
Note that, since $g \in A_n$ and $n$ is 
odd, $k$ must be odd. If $a_1 < q-1$ then 
$g$ belongs to a member of 
$\mathscr{M}_{a_1}$. Now assume that 
$a_1 \geq q-1$, so that 
$a_i \geq q-1$ for all $i=1,\ldots,k$. It 
follows that $3q = n = \sum_{i=1}^k a_i 
\geq k(q-1)$, therefore $k \leq 3$ being 
$q>3$ odd. If $k=1$ then $g$ belongs to a 
member of $\mathscr{M}_{-1}$, so now 
assume that $k=3$. Since $q-1 \leq 
a_1 \leq a_2 \leq a_3$, the only 
possibilities for $(a_1,a_2,a_3)$ are 
either $(q-1,q-1,q+2)$ or  
$(q-1,q,q+1)$, therefore 
$g$ belongs to a member of 
$\mathscr{M}_{-1}$ by Lemma 
\ref{abc3} since $q \equiv 1 \pmod 6$ 
(respectively case (2) and case (4)). 
Note that here is the point where we use 
the crucial assumption $n \equiv 3 \pmod{18}$.
%\textcolor{red}{(SPOSTATO) The reason why we require $q \equiv 1 \pmod 6$ is the following: the elements of cycle type $(q-1,q,q+1)$ and $(q-1,q-1,q+2)$ stabilize a partition of $\{1,\ldots,n\}$ with three blocks, being $q-1$ divisible by $6$. If this was not the case, we would need to include intransitive subgroups of type $S_{q-1} \times S_{2q+1}$ in the covering, and this does not allow us to complete the argument.}
%In the first case $g$ belongs to a member of $\mathscr{M}_{q-1}$ if $q \not \equiv 1 \pmod 3$, since $1$ cannot belong to both orbits of size $q-1$, and it belongs to a member of $\mathscr{M}_{-1}$ if $q \equiv 1 \pmod 3$ since in this case $q-1$ and $q+2$ are divisible by $3$; in the second case $g$ belongs to a member of $\mathscr{M}_{-1}$ since $q-1$ and $q+1$ are even.

\item For every $g \in \Pi$ there exists a 
unique $M \in \mathscr{M}$ such that 
$g \in M$. 
More precisely, if $g \in \Pi_{-1}$ then 
the unique member of $\mathscr{M}$ 
containing $g$ is the unique member of 
$\mathscr{M}_{-1}$ whose blocks are the 
three orbits of $g^3$, and 
if $g\in \Pi_a$, $a\in \{1,\ldots,q-2\}$,  
then the unique member of $\mathscr{M}$ 
containing $g$ is the subgroup in 
$\mathscr{M}_a$ sharing an orbit of size 
$a$ with $g$. This is because no element 
of $\Pi$ which is not an $n$-cycle 
stabilizes a partition with $3$ blocks, 
a fact that can be easily proved by using 
Lemma \ref{abc3}.
\end{enumerate}

From now on let $\mathscr{M}_j$ be a 
$S_n$-class of maximal subgroups of $A_n$ 
not contained in $\mathscr{M}$ (in other 
words we think of $j$ as an index in 
$I_{S_n}\setminus I$) and let $M_j$ be 
any element of $\mathscr{M}_j$. 
%\blue{Let $\varepsilon_j$ be $2$ if $\mathscr{M}_j$ consists of primitive subgroups, and $\varepsilon_j=1$ otherwise.} 
We deduce 
from Lemma \ref{lem_mji} that, if 
$i \in I$, then
%$$\frac{|M_j \cap \Pi_i|}{|M_i \cap \Pi_i|}
%\leq  \varepsilon_j \cdot \frac{m_j(i) |M_j|}{m_i(i) 
%|M_i|} = \varepsilon_j m_j(i) \frac{|M_j|}{|M_i|} \leq 
% \varepsilon_j n^3 \frac{|M_j|}{|M_i|},$$
%Then we have that
$$
d(M_j) = \sum_{i \in I} 
\frac{|M_j \cap \Pi_i|}{|M_i \cap \Pi_i|} 
\leq \sum_{i \in I}
\frac{m_j(i)|M_j|}{m_i(i)|M_i|} 
\leq  |M_j| \sum_{i \in I} 
\frac{m_j(i)}{|M_i|}.
$$
Now, if $\mathscr{M}_j$ is a 
$S_n$-class of maximal intransitive 
subgroups of $A_n$ then $m_j(-1)=0$, 
while $m_j(i) \leq 1$ for $1 \leq i \leq 
q-2$ and also $m_j(i)=0$,  except 
for at most $4$ values of $i$. 
This is because, thinking of $j$ as the 
size of an orbit of the members of 
$\mathscr{M}_j$, with $q-1 \leq j < n/2$, 
the possible values of $i$ such that 
$1\leq i\leq q-2$ and $m_j(i)\neq 0$ are 
obtained by solving the equations 
$j=(n-i)/2-1$, $j=(n-i)/2+1$, $j=(n-i-1)/2$ 
and $j=(n-i+1)/2$. 
Note that if $M_j$ is of type 
$(S_{q-1} \times S_{2q+1}) \cap A_n$ 
then $M_j \cap \Pi = \varnothing$, 
implying that $d(M_j)=0$. If this is not 
the case then $|M_j| \leq q!(2q)!$, 
therefore
$$d(M_j) \leq \frac{4 \cdot q! \cdot 
(2q)!}{(q-2)! \cdot (2q+2)!} = 
\frac{4q(q-1)}{(2q+2)(2q+1)} < 1.$$ 

If $\mathscr{M}_j$ is a $S_n$-class 
of transitive subgroups of $A_n$ then 
$m_j(i) \leq n^3$ by Lemma \ref{ncube}. 
Moreover, if $M_j$ is imprimitive then 
$|M_j| \leq (n/5e)^n (5n)^{5/2} e \sqrt{n}$ 
by Lemma \ref{bimpr}, and if $M_j$ is 
primitive then $|M_j| \leq 2^n$ 
by \cite{Maroti_Pr}. 
Since $|M_i| \geq |(S_q \wr S_3) \cap 
A_n| = 3 q!^3 > 3(n/3e)^n$ for every 
$i \in I$ and $|I|<n$, we obtain that
$$d(M_j) \leq 
%\blue{2} 
|M_j| 
\sum_{i \in I} \frac{m_j(i)}{|M_i|} < 
\frac{ 
%\blue{2} 
n^4 (n/5e)^n (5n)^{5/2} e 
\sqrt{n}}{3(n/3e)^n} = 
\frac{
%\blue{2} \cdot 
5^{5/2}e}{3} n^7 (3/5)^n < 1,$$
as long as $n\geq 65$.

Finally when $n=21, 39$ or $57$, 
then $q$ is a prime, respectively: $7$, $13$ and $19$. Since $|I|=q-1$ and $m_j(i) \leq n^3$, we can use the bound
$$d(M_j) \leq |M_j| \sum_{i \in I} \frac{m_j(i)}{M_i} \leq \frac{(q-1)n^3|M_j|}{3 \cdot q!^3},$$
which gives the result when $n \in \{39,57\}$ or when $n=21$ and $M_j$ is primitive, by making use of the bound $|M_j| \leq 3!^q \cdot q!$. Here we use the list of primitive subgroups of a given (small) degree, available in
\cite[Table B.2]{DM}.

Now assume $n=21$ and $M=M_j$ is imprimitive, so 
that $M \cong (S_3 \wr S_7) \cap A_{21}$.  %the 
% elements of $\Pi$ have types $(21)$, $(1,9,11)$,
% $(2,9,10)$, $(3,8,10)$, $(4,8,9)$, $(5,7,9)$. The 
Then the only elements of $\Pi$ that stabilize a partition 
with $7$ blocks are those of type $(21)$ or of type 
$(4,8,9)$. 
Moreover $|M \cap \Pi_{-1}|=|M|/21$
and $|M \cap \Pi_4| = \binom{7}{3} \cdot \frac{3!^4}{9} 
\cdot 3! \cdot 2!^3 = 7! \cdot 48$, while 
$|M_{-1} \cap \Pi_{-1}| = |M_{-1}|/21$
and $|M_{4} \cap \Pi_{4}| = 3! \cdot \binom{17}{8} \cdot 7! 
\cdot 8!$,
hence 
$$d(M) = \frac{3!^7 \cdot 7!}{7!^3 \cdot 3!} + 
\frac{7! \cdot 48}{3! \cdot \binom{17}{8} \cdot 7! 
\cdot 8!} = \frac{315059}{171531360} < 1.$$

\end{document}